\documentclass[12pt,a4paper,leqno]{amsart}
\usepackage{amssymb,xspace}
\usepackage{amstext}
\usepackage{pdfsync}
\usepackage{hyperref}
\usepackage[mathscr]{eucal}
\theoremstyle{plain}
\usepackage{amsbsy,amssymb,amsfonts,latexsym,eucal,amscd}
\usepackage[dvips]{graphicx}
\usepackage{epsfig}
\usepackage[all]{xy}
\usepackage{pstricks}
\usepackage{mathrsfs}
\usepackage{tikz}
\usetikzlibrary{shapes}
\usetikzlibrary{positioning}
\usetikzlibrary{arrows}
\usetikzlibrary{calc}
\usepackage{txfonts}
\usepackage{hyperref}
\marginparwidth=10 true mm
\oddsidemargin=2.5 true mm
\evensidemargin=2.5 true mm
\marginparsep=5 true mm
\topmargin=0 true mm
\headheight=10 true mm
\headsep=10 true mm
\topskip=0 true mm
\footskip=15 true mm

\setlength{\textwidth}{160 true mm}
\setlength{\textheight}{241 true mm}
\setlength{\hoffset}{-2 true mm}
\setlength{\voffset}{-12 true mm}

\parindent=0 true mm

\overfullrule=0 pt
\raggedbottom

\makeatletter
\def\l@subsection{\@tocline{2}{0pt}{1pc}{5pc}{\hspace{2em}}}
\makeatother






\newcommand{\tens}[1][]{\mathbin{\otimes_{\raise1.5ex\hbox to-.1em{}{#1}}}}
\newcommand{\lltens}[1][]{{\mathop{\tens}\limits^{\rm \M{L}}}_{#1}}





\newcommand{\C }{\ensuremath{\mathbb C}}
\newcommand{\Q }{\ensuremath{\mathbb Q}}
\newcommand{\Z }{\ensuremath{\mathbb Z}}



\newcommand{\oo }{\ensuremath{\mathcal{O}}}



\newtheorem{theorem}{Theorem}[section]

\newtheorem{proposition}[theorem]{Proposition}

\theoremstyle{plain}
\newtheorem*{Theorem}{Theorem}

{\theoremstyle{definition}}

{\theoremstyle{definition}}

{\theoremstyle{definition}}

{\theoremstyle{definition}}

{\theoremstyle{definition}}

{\theoremstyle{definition}}

{\theoremstyle{definition}\newtheorem{remark}[theorem]{Remark}}

\newtheorem{conjecture}[theorem]{Conjecture}






















\newcommand{\M}[1]{\mathrm{#1}}



\entrymodifiers={+!!<0pt,\fontdimen22\textfont2>}


\def\apl#1#2#3{#1\mkern -1 mu:\mkern - 6 mu
\xymatrix@C=17pt{#2\!\ar[r]&\!#3}
}

\def\aplexp#1#2#3#4{#1\mkern -1 mu:\mkern - 6 mu
\xymatrix@C=17pt{#2\!\ar[r]^-{#4}&\!#3}
}

\def\aplcourte#1#2#3{#1\mkern -4 mu:\mkern - 8 mu
\xymatrix@C=12pt{#2\!\ar[r]&\!#3}
}

\def\aplpt#1#2#3#4{#1\mkern -4 mu:\mkern - 8 mu
\xymatrix@C=17pt{#2\!\ar[r]&\!#3#4}
}


\author{Julien Grivaux}

\address{CNRS, LATP\\
UMR 6632\\
CMI, Universit\'{e} de Provence\\
39, rue Fr\'{e}d\'{e}ric Joliot-Curie\\
13453 Marseille Cedex 13\\
France.}

\email{jgrivaux@cmi.univ-mrs.fr}
\title{Formality of derived intersections}

\begin{document}

\begin{abstract}
We study derived intersections of smooth analytic cycles, and provide in some cases necessary and sufficient conditions for this intersection be formal. In particular, if $X$ is a complex submanifold of a complex manifold $Y$, we prove 
that $X$ can be quantized if and only if the derived intersection of $X^2$ and $\Delta_Y$ is formal in $\mathrm{D}^{\mathrm{b}}\bigl (X^2 \bigr)$. 
\end{abstract}
\vspace*{1.cm}
\maketitle
\tableofcontents

\section{Introduction}
The theory of non-transverse intersections is now a classical topic in algebraic geometry, which is exposed in the classical book  \cite{FU}. The main idea goes as follows: if two (smooth) subschemes $X$ and $Y$ of a smooth complex algebraic scheme $Z$ intersect along an irreducible scheme $T$, then the refined intersection of $X$ and $Y$ consists of the two following data: the set-theoretic intersection $T=X \cap Y$, and an element $\xi$ of the Chow group $\M{CH}^r (T)$ where $r$ is the difference between the expected codimension of $X \cap Y$ (that is the sum of the codimensions of $X$ and $Y$) and the true codimension of $X \cap Y$. If $\widehat{N}$ is the restriction of $N_{X/Z}$ to $T$, the conormal excess bundle $\mathcal{E}$ is an algebraic vector bundle of rank $r$ defined by the exact sequence
\[
0 \longrightarrow \mathcal{E} \longrightarrow \widehat{N}^* \longrightarrow N^*_{T/Y} \longrightarrow 0.
\]
Then the excess intersection class $\xi$ is given by $\xi=\M{c}_r \,(\mathcal{E}^*)$. This formula is a particular case of the excess intersection formula for refined Gysin morphisms, we refer the reader to \cite[\S 6.2 and \S6.3]{FU} for more details. \par \bigskip
Recall now that  
\[
\M{ch} \, \colon \, \M{K}(Z) \otimes_{\Z} \Q \longrightarrow \M{CH}(Z) \otimes_{\Z} \Q
\]
is an isomorphism, so that dealing with Chow rings is the same as dealing with $K$-theory (modulo torsion). Intersection
theory in $K$-theory is much more easier and goes back to \cite{BS}: the product of $\oo_X$ and $\oo_Y$ is just the alternate sum of the classes of the sheaves $\M{Tor}^{\,i}_{\oo_Z}(\oo_X, \oo_Y)$, which is the element
\[
\Lambda_{-1}\,\mathcal{E} := \sum_{i \geq 0} (-1)^i \, \left[\Lambda^i \, \mathcal{E} \right].
\]
Then $\M{ch}_r\left( \Lambda_{-1}\, \mathcal{E} \right)$ is exactly the excess class $\xi$ introduced above. For a geometric interpretation of intersection in $\M{K}$-theory, we refer the reader to the classical Tor formula \cite[\S 5.C.1]{SE}.
\par \bigskip
For applications, it is essential to lift $\M{K}$-theoretic results at the level of derived categories. The intersection product of two algebraic cycles $X$ and $Y$ in this framework is simply the derived tensor product ${\vartheta}:=\oo_X \, \lltens[\oo_Z]\, \oo_Y$ in $\M{D}^{\M{b}}(Z)$, which is much more complicated to study. First of all, the element $\vartheta$ admits canonical lifts in $\M{D}^{\M{b}}(X)$ or $\M{D}^{\M{b}}(Y)$, but does not always comes from an object in $\M{D}^{\M{b}}(T)$. Thus $\vartheta$ doesn't live on the set-theoretic intersection of $X$ and $Y$. Next, for any nonnegative integer $p$, there is a canonical isomorphism between $\mathcal{H}^{-p}(\vartheta)$ and $\Lambda^p \, \mathcal{E}$. There is therefore a natural candidate for $\vartheta$, namely the formal object $\mathfrak{s} \left( \mathcal{E} \right):= \bigoplus_{i=0}^d \Lambda^i \mathcal{E}\, [i]$. It is a rather delicate question to decide if the object $\vartheta$ is formal in $\M{D}^{\M{b}}(X)$, $\M{D}^{\M{b}}(Y)$ or in $\M{D}^{\M{b}}(Z)$. The first important result in this direction was given by Arinkin and C\u{a}ld\u{a}raru for self-intersections. Their result is a far-reaching generalisation of the Hochschild-Kostant-Rosenberg isomorphism corresponding to the self-intersection of the diagonal in $X \times X$. Let us state it in the analytic category\footnote{Since we are concerned with applications in complex geometry, we will state throughout the paper the results in the analytic category; they are also valid in the algebraic category as well.}:
\begin{Theorem}[\cite{AC}] Let $X$, $Y$ be smooth complex manifolds such that $X$ is a closed complex submanifold of $Y$. Then $j_{X/Y}^* \, j_{X/Y*}\, \oo_X$ is formal in $\M{D}^{\M{b}}(X)$ if and only if $N^*_{X/Y}$ can be lifted to a holomorphic vector bundle on the first formal neighbourhood of $X$ in $Y$.
\end{Theorem}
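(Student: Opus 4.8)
The plan is to convert the formality question into an obstruction-theoretic one and then match the resulting obstruction with the obstruction governing the extension of the conormal bundle. Throughout write $j=j_{X/Y}$, $N^{*}=N^{*}_{X/Y}$ and $E=j^{*}j_{*}\oo_X\in\M{D}^{\M{b}}(X)$. As recalled in the introduction, the Hochschild--Kostant--Rosenberg computation furnishes canonical isomorphisms $\mathcal{H}^{-p}(E)\cong\Lambda^{p}N^{*}$, so that formality of $E$ means precisely
\[
E\;\simeq\;\bigoplus_{p\ge 0}\Lambda^{p}N^{*}[p]\;=\;\M{Sym}_{\oo_X}\bigl(N^{*}[1]\bigr)\quad\text{in }\M{D}^{\M{b}}(X).
\]
The essential structural input I would use is that $E$ is not merely an object but the structure sheaf of the derived self-intersection $X\times^{\M{L}}_{Y}X$, hence a sheaf of graded-commutative dg-algebras whose cohomology algebra is the \emph{free} graded-commutative $\oo_X$-algebra on the single generator $N^{*}$ placed in degree $-1$.

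First I would reduce formality to the vanishing of one primary class. The unit $\oo_X\to E$ of the algebra is split by the adjunction counit $E\to\oo_X$, so $\mathcal{H}^{0}=\oo_X$ splits off as a direct summand and the first possible failure occurs one step deeper: the truncation triangle
\[
\Lambda^{2}N^{*}[2]\to\tau_{\ge-2}\bigl(\tau_{\le-1}E\bigr)\to N^{*}[1]\to\Lambda^{2}N^{*}[3]
\]
defines a canonical class $\omega\in\M{Ext}^{2}_{\oo_X}\bigl(N^{*},\Lambda^{2}N^{*}\bigr)$, the Atiyah-type obstruction to splitting $N^{*}[1]$ off the bottom two terms of $E$. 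The claim to establish is that \emph{$E$ is formal if and only if $\omega=0$}. The ``only if'' is clear. For the ``if'' direction I would invoke the freeness of the cohomology algebra: since $\Lambda^{\bullet}N^{*}=\M{Sym}(N^{*}[1])$ is free graded-commutative on a generator in a single degree, its Harrison/Andr\'e--Quillen cohomology is concentrated so that every higher Massey-type obstruction is a multiplicative consequence of $\omega$, and once $\omega$ is killed one builds the splitting degree by degree.

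On the geometric side, extending $N^{*}$ to a locally free module on the first formal neighbourhood $X^{(1)}$, whose structure sheaf sits in the square-zero extension of sheaves of rings $0\to N^{*}\to\oo_{X^{(1)}}\to\oo_X\to0$, is a classical deformation problem: the obstruction is the cup product $\M{at}(N^{*})\cup\kappa\in\M{Ext}^{2}_{\oo_X}\bigl(N^{*},N^{*}\otimes N^{*}\bigr)$ of the Atiyah class $\M{at}(N^{*})\in\M{Ext}^{1}_{\oo_X}\bigl(N^{*},N^{*}\otimes\Omega^{1}_{X}\bigr)$ with the Kodaira--Spencer class $\kappa\in\M{Ext}^{1}_{\oo_X}\bigl(\Omega^{1}_{X},N^{*}\bigr)$ of the extension, the latter being exactly the class of the conormal sequence $0\to N^{*}\to\Omega^{1}_{Y}|_{X}\to\Omega^{1}_X\to0$; when it vanishes the liftings form a torsor under the corresponding $\M{Ext}^{1}$. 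The heart of the proof is then to identify $\omega$ with the relevant component of $\M{at}(N^{*})\cup\kappa$: both are manufactured canonically out of the Atiyah class of $N^{*}$ and the first-order geometry of the embedding, so I would produce the identification by comparing the two extensions on an explicit local Koszul model and checking independence of the choices. Granting it, $\omega=0$ is equivalent to the liftability of $N^{*}$ to $X^{(1)}$, which together with the previous step proves the theorem.

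The main obstacle is the combination of the two non-formal steps just flagged. A priori, formality demands trivialising the entire $A_{\infty}$-structure on $\Lambda^{\bullet}N^{*}$, that is, killing infinitely many higher products together with their indeterminacies; the crucial point — and the step I expect to require the most care — is that the freeness of the cohomology algebra forces this infinite tower of obstructions to collapse onto the single primary class $\omega$. Geometrically, the collapse is realised by the lift itself: a locally free $\widetilde{N}^{*}$ on $X^{(1)}$ supplies exactly the first-order datum needed to write down a global split Koszul dg-model of $E$, making the formality isomorphism explicit once $\omega$ vanishes. Pinning down this collapse, and matching the two $\M{Ext}^{2}$-classes compatibly with all the natural identifications, is where the real work of the proof lies.
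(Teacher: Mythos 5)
First, a point of comparison: the paper does not prove this statement --- it is quoted from Arinkin--C\u{a}ld\u{a}raru \cite{AC} as background --- so there is no internal proof to measure your sketch against. The closest thing in the paper is the final Proposition on the truncations $\tau^{\geq -(p+1)}\tau^{\leq -p}$, whose mechanism (use the ring structure on $j_*\,j^*j_*\oo_X$ and antisymmetrization to propagate a degree-one splitting to all exterior powers) is exactly the engine of the actual proof in \cite{AC}. I therefore judge your proposal against that argument.

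Your skeleton is the right one: split off $\oo_X$ by the unit, isolate a primary class $\omega\in\M{Ext}^2(N^*,\Lambda^2N^*)$ from the two-term truncation, and match it against the obstruction $\M{at}(N^*)\cup\kappa$ to lifting $N^*$ to the first formal neighbourhood. But the two steps you yourself flag as requiring care are the entire content of the theorem, and the tool you propose for the harder one does not work as stated. The claim that $\omega=0$ forces formality because the cohomology algebra is free and its Harrison/Andr\'e--Quillen cohomology is ``concentrated'' is a statement about the algebra over a point; in the sheaf-theoretic setting the obstruction groups are Ext groups such as $\M{Ext}^{k}(N^*,\Lambda^kN^*)$, which receive contributions from $H^{>0}(X,-)$ and do not vanish for free graded-commutative cohomology --- indeed $\omega$ itself lives in such a group and is generically nonzero, which is why the theorem has content. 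Even granting the splitting of $\tau^{\geq-2}\tau^{\leq -1}E$, promoting it to a map $N^*[1]\to E$ requires killing infinitely many further Postnikov obstructions, and in \cite{AC} this is not done by abstract collapse: the lift $\widetilde N^*$ on the first neighbourhood is used to \emph{construct} the map $N^*[1]\to E$ directly, after which the multiplication and antisymmetrization produce $\Lambda^pN^*[p]\to E$ for all $p$ with the correct effect on $\mathcal{H}^{-p}$.

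The second gap is the identification of $\omega$ with the lifting obstruction, which you defer to ``compare on a local Koszul model.'' Beyond the fact that this comparison is the heart of the matter, there is a mismatch of targets you do not address: the deformation-theoretic obstruction to extending $N^*$ over the square-zero extension lives in $\M{Ext}^2(N^*,N^*\otimes N^*)$, whereas $\omega$ lives in $\M{Ext}^2(N^*,\Lambda^2N^*)$, so $\omega=0$ only kills the antisymmetric part a priori. One must either show that the symmetric component never obstructs, or reorganize the argument so that only the antisymmetrized class appears. As written, your argument yields ``$N^*$ extends $\Rightarrow\omega=0$'' and ``formal $\Rightarrow\omega=0$,'' but not the stated equivalence.
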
 
In particular, if the injection of $X$ into its first formal neighbourhood in $Y$ admits a retraction $\sigma$, which means in the terminology of \cite{HKR} that $(X, \sigma)$ is a quantized analytic cycle, then $j_{X/Y}^* \, j_{X/Y*}\, \oo_X$ is formal. This fact was discovered by Kashiwara in the beginning of the nineties \cite{KA}, and his proof has been written down in \cite{HKR}. 
In this paper, we are interested in more general intersections than self-intersections. We will deal with cycles intersecting linearly, which means that the intersection is locally biholomorphic to an intersection of linear subspaces. We will also make the additional assumption that one of the cycles, say $X$, can be quantized. Our main result runs as follows \footnote{The author was informed that a similar result has been announced independently by Arinkin, C\u{a}ld\u{a}raru and Hablicsek \cite{ACH}.}:
\begin{theorem} \label{boss} Let $Z$ be a complex manifold, let $(X, \sigma)$ be a quantized analytic cycle in $X$, and $Y$ be a smooth complex submanifold of $Z$ such that $X$ and $Y$ intersect linearly. Put $T=X \cap Y$, and let $\widehat{N}$ be the restriction of $N_{X/Z}$ to $T$, and let $\mathcal{E}$ be the associated conormal excess bundle on $T$. Then the object $j_{Y/Z}^* \,\, j_{X/Z *} \,  \mathcal{O}_X$ is formal in the derived category $\M{D}^{\M{b}}(Y)$ if and only if and only if the excess conormal exact sequence
\[
0 \longrightarrow \mathcal{E} \longrightarrow \widehat{N}^* \longrightarrow N^*_{T/Y} \longrightarrow 0
\]
is holomorphically split.
\end{theorem}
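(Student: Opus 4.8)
The plan is to work with the explicit derived tensor product. Writing $i = j_{X/Z}$ and $j = j_{Y/Z}$, the derived restriction to $Y$ of the pushforward $i_* \mathcal{O}_X$ is computed as $\vartheta := \mathcal{O}_X \otimes^{\mathbf L}_{\mathcal{O}_Z} \mathcal{O}_Y$, regarded as an object of $\M{D}^{\M{b}}(Y)$ supported on $T$. As recalled in the introduction, its cohomology sheaves are $\mathcal{H}^{-p}(\vartheta) \simeq \Lambda^p \mathcal{E}$, so that formality means precisely that $\vartheta \simeq \bigoplus_{p} \Lambda^p \mathcal{E}\,[p]$. The first step is to compute $\vartheta$ through a local Koszul resolution of $\mathcal{O}_X$: after restriction to $Y$, the Koszul differentials in the directions of $N^*_{T/Y}$ survive and resolve $\mathcal{O}_T$, whereas the differentials in the excess directions $\mathcal{E}$ vanish identically, since the corresponding equations of $X$ already vanish along $Y$. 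The hypothesis that $X$ and $Y$ meet linearly guarantees that such adapted coordinates exist near every point of $T$, so that $\vartheta$ is \emph{locally} formal; the content of the theorem is therefore entirely a matter of global gluing.

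Next I would set up the obstruction theory for this gluing. Since $\vartheta$ is locally formal, its isomorphism class is controlled by a Postnikov system whose primary obstruction is a morphism $\mathcal{O}_T \to \mathcal{E}\,[2]$ in $\M{D}^{\M{b}}(Y)$, that is, a class $\omega \in \M{Ext}^2_Y(k_* \mathcal{O}_T, k_* \mathcal{E})$, where $k \colon T \hookrightarrow Y$ denotes the inclusion. The key computation is the structure of this group: there is a local-to-global type spectral sequence with second page $E_2^{a,b} = H^a\bigl(T, \Lambda^b N_{T/Y} \otimes \mathcal{E}\bigr)$ abutting to $\M{Ext}^{a+b}_Y(k_* \mathcal{O}_T, k_* \mathcal{E})$, and the term $E_2^{1,1} = H^1(T, N_{T/Y} \otimes \mathcal{E}) = \M{Ext}^1_T(N^*_{T/Y}, \mathcal{E})$ is exactly where the extension class $e$ of the excess conormal sequence lives. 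The heart of the argument is to identify $\omega$ with $e$ up to an invertible scalar: unwinding the Koszul differential that couples the excess directions to the transverse ones shows that the Massey-type product producing $\omega$ is governed by the connecting map of the excess sequence, i.e. by $e$.

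This is where the quantization of $X$ enters, and where I expect the main difficulty. A priori the obstruction $\omega$ could receive a second contribution coming from the self-intersection of $X$ alone, namely the obstruction measured by the Arinkin--C\u{a}ld\u{a}raru theorem, which detects whether $N^*_{X/Z}$ lifts to the first formal neighbourhood. The retraction $\sigma$ furnishes exactly such a lift, so that by the Kashiwara--HKR formality of $j_{X/Z}^* \, j_{X/Z *}\mathcal{O}_X$ this ambient contribution is trivial, leaving $e$ as the sole source of $\omega$. Making this separation rigorous — disentangling the excess coupling class from the formal-neighbourhood lifting class inside the same $\M{Ext}^2$ group, uniformly and in a way that survives globalisation — is the delicate technical point of the proof.

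Granting this identification, the theorem follows in both directions. For the ``only if'' part, formality of $\vartheta$ forces $\omega = 0$, hence $e = 0$, so the excess conormal sequence splits. For the ``if'' part I would not merely invoke vanishing of obstructions but construct the isomorphism directly: a holomorphic splitting $\widehat{N}^* \simeq \mathcal{E} \oplus N^*_{T/Y}$ factors the restricted Koszul complex as the tensor product of the exterior algebra $\Lambda^\bullet \mathcal{E}$ with zero differential by the transverse Koszul resolution of $\mathcal{O}_T$; combined with the quantization $\sigma$, which trivialises the ambient first-order structure, this yields an explicit isomorphism $\vartheta \simeq \bigoplus_p \Lambda^p \mathcal{E}\,[p]$ in $\M{D}^{\M{b}}(Y)$.
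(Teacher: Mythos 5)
Your reduction to a global gluing problem and your local Koszul analysis are correct, and your sketch of the ``if'' direction is in the right spirit (the paper globalizes the local Koszul picture by restricting the Atiyah--Kashiwara complex $\mathcal{P}_{\sigma}$ to $Y$ and then projecting $\M{F}_1\bigl(\Lambda^k\widehat{N}\bigr)$ onto $\Lambda^k\mathcal{E}$ via the chosen splitting; this is exactly the role you assign to $\sigma$, though your local factorization does not by itself glue). The genuine gap is in the ``only if'' direction, at the step you yourself flag as delicate: the identification of your primary obstruction $\omega\in\M{Ext}^2_Y\bigl(j_{T/Y*}\oo_T,\,j_{T/Y*}\mathcal{E}\bigr)$ with the extension class $e$ of the excess conormal sequence. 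This identification is false because $\omega$ vanishes identically in the setting of the theorem. The quantization $\sigma$ produces not only the projection $\mathfrak{p}\colon j_{Y/Z}^*\,j_{X/Z*}\,\oo_X\to\oo_T$ but also the Atiyah morphism $\mathfrak{at}_{\,\sigma}\colon j_{Y/Z}^*\,j_{X/Z*}\,\oo_X\to\mathcal{E}[1]$ (the degree $-1$ component of $\Psi_{\sigma}$), which induces an isomorphism on $\mathcal{H}^{-1}$; together these split the first truncation triangle, so that $\tau^{\geq-1}\bigl(j_{Y/Z}^*\,j_{X/Z*}\,\oo_X\bigr)\simeq\oo_T\oplus\mathcal{E}[1]$ \emph{unconditionally}, i.e. $\omega=0$ whether or not the excess sequence splits. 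Hence ``formality $\Rightarrow\omega=0\Rightarrow e=0$'' collapses: when $\M{rk}\,\mathcal{E}\geq 2$ and $e\neq 0$ the object is non-formal while your obstruction still vanishes, and when $\M{rk}\,\mathcal{E}=1$ the object is automatically formal and the theorem forces $e=0$ by a mechanism your class cannot see. A secondary issue is that formality of a complex with $r+1$ nonzero cohomology sheaves is not governed by a single primary invariant in any case.

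The information carried by $e$ sits deeper: the restricted AK complex has terms $\M{F}_1\bigl(\Lambda^k\widehat{N}\bigr)$, which contain $\Lambda^k\mathcal{E}$ as the bottom piece of the Leray filtration but are not isomorphic to it when the sequence does not split, so the relevant obstructions live in groups such as $\M{Ext}^2_Y\bigl(j_{T/Y*}\Lambda^p\mathcal{E},\,j_{T/Y*}\Lambda^{p+1}\mathcal{E}\bigr)$ for $p\geq1$, where $e$ enters only through composites of the form $\wedge\circ(\M{id}\otimes e)$ whose vanishing does not formally return $e=0$. This is precisely why the paper does not argue by obstruction classes: its first proof of the reverse implication extracts a retraction of $\alpha\colon\mathcal{E}\to\widehat{N}^*$ from the multiplicative structure of $\bigl(j_{Y/Z}^*\,j_{X/Z*}\,\oo_X\bigr)\lltens[\oo_Y]\bigl(j_{Y/Z}^*\,j_{X/Z*}\,\oo_X\bigr)\simeq j_{Y/Z}^*\bigl(\oo_X\lltens[\oo_Z]\oo_X\bigr)$ combined with the quantized HKR isomorphism, by comparing the two unit maps on $\mathcal{H}^{-1}$; its second proof computes $\M{Hom}\bigl(j_{T/Y*}\,\mathfrak{s}(\mathcal{E}),\,j_{Y/Z}^*\,j_{X/Z*}\,\oo_X\bigr)$ by Grothendieck duality and reads off a retraction of $\alpha$ from the component $\varphi_{r-1,\,r-1+d}$ of any formality isomorphism. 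To repair your argument you would need to replace the single class $\omega$ by one of these mechanisms, which is where the real content of the reverse implication lies.
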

Two extreme cases of this theorem are already known: the case of self-intersections and the case of transverse intersections. For self-intersections, the formality follows directly from Arinkin-Cald\u{a}r\u{a}ru's result, since $X$ is assumed to be quantized. For transverse intersections, there are no higher $\M{Tor}$ sheaves so that $j_{Y/Z}^* \,\, j_{X/Z *} \,  \mathcal{O}_X\simeq j_{T/Y*}\,\oo_T$. An important point must be noticed: although $j_{Y/Z}^* \,\, j_{X/Z *} \,  \mathcal{O}_X$ is locally formal, and hence comes locally from $\M{D}^{\M{b}}(T)$, this property is no longer true globally in general if the excess exact sequence is not split; we provide counter-examples. 
\par \medskip
One of the principal applications of Theorem \ref{boss} is the existence of a formality criterion in order that an analytic cycle be quantized:
\begin{theorem} \label{QC}
Let $X$ be a smooth complex submanifold of a complex manifold $Y$. Then $X$ can be quantized if and only if the object
$j_{X^2/Y^2}^* \, \mathcal{O}_{\Delta_Y}$ is formal in $\M{D}^{\M{b}}({X^2})$. 
\end{theorem}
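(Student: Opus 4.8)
The plan is to deduce the statement directly from Theorem \ref{boss}, applied with ambient manifold $Z = Y^2$, quantized cycle $\Delta_Y$, and smooth submanifold $X^2 = X \times X$. First I would record that the diagonal is automatically quantizable: either projection $p_i \colon Y^2 \to Y$ restricts to an isomorphism on $\Delta_Y$ and hence induces a retraction of the first formal neighbourhood of $\Delta_Y$ in $Y^2$ onto $\Delta_Y$, so $(\Delta_Y, \sigma)$ is a quantized analytic cycle. Since $j_{\Delta_Y/Y^2 *}\, \oo_{\Delta_Y} = \oo_{\Delta_Y}$, the object produced by Theorem \ref{boss} in this situation is $j_{X^2/Y^2}^*\, \oo_{\Delta_Y}$, precisely the object in the statement.

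I would then verify the hypotheses and compute the excess data. Working in holomorphic coordinates adapted to $X \subset Y$, both $X^2$ and $\Delta_Y$ become linear subspaces of $Y^2$, so $\Delta_Y$ and $X^2$ intersect linearly, with $T = \Delta_Y \cap X^2 = \Delta_X \cong X$. Under $\Delta_Y \cong Y$ one has $N_{\Delta_Y/Y^2} \cong T_Y$, whence $\widehat{N} = T_Y|_X$ and $\widehat{N}^* = \Omega^1_Y|_X$; likewise $N^*_{T/X^2} = N^*_{\Delta_X/X^2} \cong \Omega^1_X$, and the surjection $\widehat{N}^* \to N^*_{T/X^2}$ is the canonical restriction of $1$-forms, with kernel the conormal bundle $N^*_{X/Y}$. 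Thus the excess conormal exact sequence of Theorem \ref{boss} is exactly the conormal sequence of $X$ in $Y$,
\[
0 \longrightarrow N^*_{X/Y} \longrightarrow \Omega^1_Y|_X \longrightarrow \Omega^1_X \longrightarrow 0,
\]
and by Theorem \ref{boss} the object $j_{X^2/Y^2}^*\, \oo_{\Delta_Y}$ is formal in $\M{D}^{\M{b}}(X^2)$ if and only if this sequence splits holomorphically.

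The remaining and crucial point is the geometric equivalence: the conormal sequence above splits if and only if $X$ can be quantized, i.e.\ the first formal neighbourhood $X^{(1)}$, with structure sheaf $\oo_Y/I_X^2$, retracts onto $X$. A retraction is an $\oo_X$-algebra section $\sigma^{\sharp}$ of the square-zero extension $0 \to N^*_{X/Y} \to \oo_Y/I_X^2 \to \oo_X \to 0$; given $\sigma^{\sharp}$, the map $f \mapsto (f \bmod I_X^2) - \sigma^{\sharp}(f|_X)$ is a derivation $\oo_Y \to N^*_{X/Y}$, hence factors through $\Omega^1_Y|_X$ and provides a retraction of $N^*_{X/Y} \hookrightarrow \Omega^1_Y|_X$, i.e.\ a splitting of the conormal sequence. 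I expect the converse to be the main technical obstacle, and the clean way to handle both directions at once is to observe that the two sets of solutions, when non-empty, are torsors under $\mathrm{Hom}_{\oo_X}(\Omega^1_X, N^*_{X/Y})$, and that the obstructions to their existence are one and the same class in $\mathrm{Ext}^1_{\oo_X}(\Omega^1_X, N^*_{X/Y}) \cong H^1(X, T_X \otimes N^*_{X/Y})$, computed from the local transition data of $X \subset Y$. Hence one sequence splits precisely when the other does; combined with the previous paragraph this yields the stated equivalence.
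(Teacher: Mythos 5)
Your argument is correct and follows the same route as the paper: reduce to the diagonal, note that $\Delta_Y$ and $X^2$ intersect linearly along $\Delta_X$ with $\Delta_Y$ quantized by a projection, identify the excess sequence with the conormal sequence of $X$ in $Y$, and invoke Theorem \ref{boss}. The only point the paper leaves implicit is the classical equivalence between quantizability of $X$ and the holomorphic splitting of its conormal sequence, which you supply; for the converse direction the explicit assignment $\sigma(g)=\tilde g-\rho(dg|_X)$ (for any local lift $\tilde g$ and any retraction $\rho$ of $N^*_{X/Y}\hookrightarrow \Omega^1_{Y|X}$) settles it directly, so the obstruction-class comparison you anticipate as the main difficulty is not actually needed.
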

This result follows directly from Theorem \ref{boss} by applying the trick of the reduction to the diagonal. Thus, the existence of an extension to the conormal bundle at the first order or the existence of an infinitesimal retraction for a complex analytic cycle are both equivalent to the formality of a derived intersection: the first one involves the pair $(X, X)$ in $Y$ and the second one involves the pair $\bigl(\Delta_{Y}, X^2\bigr)$ in $Y^2$. However, it would be very interesting to know if $\oo_X \, \lltens[\oo_Y] \, \oo_X$ is always formal in $\M{D}^{\M{b}}(Y)$, since no necessary condition for the formality of this complex is known up the the author's knowledge. We conjecture that it is indeed the case, and provide some evidence towards this conjecture.
\par \medskip
\textbf{Acknowledgments.} This paper grew up from conversations with Daniel Huybrechts and Richard Thomas, I would like to thank them both for many interesting discussions on this topic. 
\section{Notations and conventions}
\par \smallskip
All complex manifolds considered are smooth and connected. By submanifold, we always mean \textit{closed} submanifold. If $X$ and $Y$ are complex analytic spaces such that $X$ is a complex subspace of $Y$, we denote by $j_{X/Y}$ the corresponding inclusion morphism, and we denote by $\overline{X}_Y$ the first formal neighbourhood of $X$ in $Y$. For any holomorphic vector bundle $\mathcal{E}$ on a complex manifold $X$, we denote by $\mathfrak{s} \left( \mathcal{E} \right)$ the formal complex $\bigoplus_{k \geq 0} \Lambda^k \mathcal{E} \,[k]$. 
\par \medskip
For any ringed complex space $(X, \oo_X)$, we denote by $\M{D}^{\M{b}}(X)$ the bounded derived category of the abelian category of coherent sheaves of $\oo_X$-modules on $X$. For any holomorphic map $f \colon X \longrightarrow Y$ between complex manifolds, we denote by $f^* \colon \M{D}^{\M{b}}(Y) \longrightarrow  \M{D}^{\M{b}}(X)$ the associated derived pullback. The non-derived pullback functor is denoted by $\M{H}^0 \left(f^* \right)$.

\section{Quantized analytic cycles and AK complexes} \label{AK}
We recall terminology and basics constructions from \cite{HKR}. Let $Z$ be complex manifold. A \textit{quantized analytic cycle} in $Z$ is a pair $(X, \sigma)$ such that $X$ is a closed connected complex submanifold of $Z$ and $\sigma$ is a retraction of the injection of $X$ into its first formal neighbourhood in $Z$. If we denote the latter by $\overline{X}_Z$, then $\sigma$ is a morphism of sheaves of $\mathbb{C}_X$-algebras from $\oo_X$ to $\oo_{\overline{X}_Z}$. Equivalently, $\sigma$ is a splitting of the Atiyah sequence
\begin{equation} \label{atiyah}
0 \longrightarrow N^*_{X/Z}\longrightarrow \oo_{\overline{X}_Z} \longrightarrow \oo_X \longrightarrow 0
\end{equation}
in the category of sheaves of $\C_X$-algebras, that is an isomorphism of sheaves of $\C_X$-algebras between $\oo_{\overline{X}_Z}$ and the trivial extension of $\mathcal{O}_X$ by $N^*_{X/Z}$. The set of possible quantizations $\sigma$ of $X$ is either empty or an affine space over $\M{Hom} \left(\Omega_X^1, N^*_{X/Z}\right)$. For any quantized analytic cycle $(X, \sigma)$ in $Z$, there is a canonical complex $\mathcal{P}_{\sigma^{\vphantom{a}}}$ of $\oo_{\overline{X}_Z}$-modules, called the \textit{Atiyah-Kashiwara complex}, defined as follows: for any nonnegative integer $k$,
\[
(\mathcal{P}_{\sigma^{\vphantom{a}}})_{-k}=\Lambda^k_{\oo_X} \oo_{\overline{X}_Z}=\Lambda^{k+1} N^*_{X/Z} \oplus \Lambda^{k} N^*_{X/Z}.
\] Besides, $(\mathcal{P}_{\sigma^{\vphantom{a}}})_{-k}$ is acted on by $\oo_{\overline{X}_Z}$ via $\sigma$ as follows: $\mathcal{O}_X$ acts in the natural way by multiplication, and $N^*_{X/Z}$ acts on $\Lambda^{k+1} N^*_{X/Z}$ by $0$ and on $\Lambda^{k} N^*_{X/Z}$ by the formula 
\[
i \, . \, (i_1 \wedge \ldots \wedge i_k)=i \wedge i_1 \wedge \ldots \wedge i_k.
\] The differential $\frac{1}{k} d_{-k}$ is given by the composition 
\[
\Lambda^{k+1} N^*_{X/Z} \oplus \Lambda^{k} N^*_{X/Z} \xrightarrow{(0, \,\M{id})} \Lambda^{k} N^*_{X/Z} \xrightarrow{(\M{id}, \,0)} \Lambda^{k} N^*_{X/Z}\oplus \Lambda^{k-1} N^*_{X/Z}.
\]
The complex $\mathcal{P}_{\sigma^{\vphantom{a}}}$ is a left resolution of $\mathcal{O}_X$ over $\oo_{\overline{X}_Z}$ which neither projective nor flat. However, the morphism
\[
j_{X/Z}^* \,\, j_{X/Z*} \, \oo_X \simeq j_{X/Z}^* \, \mathcal{P}_{\sigma^{\vphantom{a}}} \longrightarrow \mathcal{P}_{\sigma^{\vphantom{a}}} \otimes_{\mathcal{O}_{\overline{X}_Z}} \mathcal{O}_X=\mathfrak{s}\left( N^*_{X/Z}\right)
\]
is an isomorphism \cite[Proposition 4.3]{HKR}, called the \textit{quantized Hochschild-Kostant-Rosenberg isomorphism.}
If $\sigma$ is fixed, so that the isomorphism between $\oo_{\overline{X}_Z}$ and $N^*_{X/Z} \oplus \,\oo_X$ is fixed, and if $\varphi$ is in $\M{Hom} \left(\Omega_X^1, N^*_{X/Z} \right)$, then $\mathcal{P}_{\sigma+\varphi}=\Lambda^{k+1} N^*_{X/Z} \oplus \Lambda^{k} N^*_{X/Z}$ where $\oo_X$ acts on $\Lambda^{k+1} N^*_{X/Z}$ by multiplication, on $\Lambda^k N^*_{X/Z}$ by $f \, . \, (i_1 \wedge \ldots \wedge i_k)=\varphi(df) \wedge i_1 \wedge \ldots \wedge i_k$, and $N^*_{X/Z}$ acts by wedge product. If $N^*_{X/Z}$ admits a holomorphic connection $\nabla \colon N^*_{X/Z} \longrightarrow  \Omega_X^1 \otimes N^*_{X/Z}$, there is an associated natural isomorphism between $\mathcal{P}_{\sigma^{\vphantom{a}}}$ and $\mathcal{P}_{\sigma + \varphi}$ given in any degree $-k$ by 
\begin{equation} \label{change}
(\underline{i}, \underline{j}) \rightarrow (\underline{i}+ \{ \varphi \wedge \Lambda^k \nabla \}(\underline{j}), \underline{j}).
\end{equation}
\section{Linear intersections and excess contributions} \label{linear}
Let us consider three complex manifolds $X$, $Y$ and $Z$ such that $X$ and $Y$ are closed complex submanifolds of $Z$. Let $T=X \cap Y$. We will say that $X$ and $Y$ intersect \textit{linearly} if for any point $t$ in $T$, there exist local holomorphic coordinates of $Z$ near $t$ such that $X$ and $Y$ are given by linear equations in these coordinates. If this is the case, $T$ is smooth, and there exists a holomorphic vector bundle $\mathcal{E}$ on $T$ fitting in the exact sequence
\begin{equation} \label{excess}
0 \longrightarrow \mathcal{E} \overset{\alpha}{\longrightarrow} \widehat{N}^* \overset{\pi}{\longrightarrow} N^*_{T/Y} \longrightarrow 0
\end{equation}
where $\widehat{N}=j_{T/X}^*\, N_{X/Z}$. The bundle $\mathcal{E}$ is called the \textit{excess intersection bundle}. If two cycles $X$ and $Y$ in $Z$ intersect linearly then for any nonnegative integer $k$, $\M{Tor}^k_{\oo_Z} (\oo_X, \oo_Y)=\Lambda^k \mathcal{E}$. The two extreme cases of linear intersections are transverse intersections (in that case $\mathcal{E}= \{0 \}$) and self-intersections (in that case $X=Y$ and $\mathcal{E}=N^*_{X/Z}$). 
\par \medskip
We give computations in local coordinates which will be useful in the sequel. By definition of linear intersections,
we can find local holomorphic coordinates 
\[
\left\{(x_i)_{1 \leq i \leq p}, (y_j)_{1 \leq j \leq q}, (t_k)_{1 \leq k \leq r}, (z_l)_{1 \leq l \leq s }\right\}
\] on $Z$ in a neighbourhood $U$ of any point in $T$ such that locally
\begin{alignat*}{5}
&X  =\, &&\{x_1=0\} &&\cap \ldots \cap \{x_p=0\} &&\cap \{t_1=0\} \cap \ldots \cap \{t_l=0\}
\\
&Y =\,  &&\{y_1=0\} &&\cap \ldots \cap \{y_q=0\} &&\cap \{t_1=0\} \cap \ldots \cap \{t_l=0\}
\end{alignat*}
The excess bundle $\mathcal{E}$ is the bundle spanned by $dt_1, \ldots, dt_r$.
Let $\mathcal{K}[U]$ be the Koszul complex on $U$ associated with the regular sequence \[
x_1, \ldots, x_p, t_1, \ldots, t_r\]
defining $X$, and let $\tau$ be the local infinitesimal retraction of $\overline{X}_Z$ in $X$ induced by the projection $(\underline{x}, \underline{y}, \underline{t}, \underline{z}) \rightarrow (\underline{0}, \underline{y}, \underline{0}, \underline{z})$.
Then there is a natural morphism of complexes $ \gamma$ from $\mathcal{K}(U)$ to $\left(\mathcal{P}_{\tau}\right)_{\, |\,U}$ given as follows (see the proof of \cite[Proposition 3.2]{HKR}): if $I \subset\{ 1 \ldots, p \}$ and $J \subset \{ 1 \ldots, r \}$, then
\begin{align}  \label{morphisme}
\gamma \left[  f(\underline{x}, \underline{y}, \underline{t}, \underline{z}) \,\, dx_{I} \wedge dt_{J} \right]=&\left( \sum_{a=1}^p \frac{\partial f}{\partial x_a}(\underline{0}, \underline{y}, \underline{0}, \underline{z}) dx_a+ \sum_{b=1}^r \frac{\partial f}{\partial t_b}(\underline{0}, \underline{y}, \underline{0}, \underline{z}) dt_b \right) \wedge dx_{I} \wedge dt_{J} \notag \\
& \hspace{2cm}\oplus f(\underline{0}, \underline{y}, \underline{0}, \underline{z}) \,\wedge dx_{I} \wedge  dt_{J}.
\end{align}

\section{Restriction of the AK complex} \label{cart}
We keep the notation of \S \ref{linear}. The cartesian diagram of analytic spaces
\[
\xymatrix {
\overline{T}_Y \ar@{}[dr]|{\square} \ar[r] \ar[d]& \overline{X}_Z \ar[d] \\
Y \ar[r] & Z
}
\]
yields a canonical morphism from $\left(\oo_{\overline{X}_Z}\right)_{\, | Y}$ to $\oo_{\overline{T}_Y}$ which is in fact an isomorphism (this can be checked using local holomorphic coordinates as in the end of \S \ref{excess}). If we restrict the sequence \ref{atiyah} to $Y$, we get the exact sequence
\[
\M{Tor}^1_{\oo_Z}(\oo_X, \oo_Y) \longrightarrow \widehat{N} \longrightarrow  \left(\oo_{\overline{X}_Z}\right)_{\, |Y} \longrightarrow \oo_T \longrightarrow 0
\]
which is exactly the sequence
\[
0 \longrightarrow \mathcal{E} \longrightarrow  \widehat{N} \longrightarrow \oo_{\overline{T}_Y} \longrightarrow \oo_T \longrightarrow 0
\]
obtained by merging the sequences (\ref{atiyah}) and (\ref{excess}) corresponding to the pair $(T, Y)$. 
\par \bigskip
We turn now to the quantized situation, where we can describe explicitly the  restriction to $Y$ functor from $\M{Mod} \left(\oo_{\overline{X}_Z} \right)$ to $\M{Mod} \left(\oo_{\overline{T}_Y} \right)$. Assume that $(X, \sigma)$ is a quantized analytic cycle in $Z$. Let us define a sheaf of rings $\widehat{\oo}_T$ on  $T$ by the formula $\widehat{\oo}_T= \left(\sigma_* \oo_{\overline{X}_Z}\right)_{|\,T}$. Then we have an exact sequence
\begin{equation} \label{malin}
0 \longrightarrow \mathcal{E} \longrightarrow \widehat{\oo}_T  \longrightarrow \oo_{\overline{T}_Y} \longrightarrow 0.
\end{equation}
The inclusion morphism from $\overline{T}_Y$ to $\overline{X}_Z$ can be described as follows: $\oo_{\overline{X}_Z}$ is an $\mathcal{O}_X$-module via $\sigma$. Since $N^*_{X/Z}$ acts on 
$\oo_{\overline{X}_Z}$, $\widehat{N}$ acts on $\widehat{\oo}_T$ and $\oo_{\overline{T}_Y}={\widehat{\oo}_T}/{\mathcal{E} .\, \widehat{\oo}_T}$
so that for any sheaf $\mathcal{F}$ of $\oo_{\overline{X}_Z}$-modules,
\begin{equation} \label{malin}
\mathcal{F}_{|\,Y}=\frac{\left(\sigma_* \mathcal{F} \right)_{|\,T}}{\mathcal{E} .\left(\sigma_* \mathcal{F} \right)_{|\,T}} \cdot
\end{equation}
Let us now consider the Leray filtration on $\Lambda^k \widehat{N}$. The p-th piece $\M{F}_p$ of this filtration is the image of $\Lambda^p \mathcal{E} \otimes \Lambda^{k-p} \widehat{N}$ in $\Lambda^k \widehat{N}$. Since $\M{Gr}_{\, 1} \left(\Lambda^k \widehat{N} \right)=\Lambda^k N^*_{T/Y}$, it follows from (\ref{malin}) that the restriction of the AK complex $\mathcal{P}_{\sigma^{\vphantom{a}}}$ to $Y$ is given for any nonnegative integer $k$ by the formula
\begin{equation} \label{gr1}
\left( \mathcal{P}_{\sigma^{\vphantom{a}}} \right)_{-k\, | \,Y}=\Lambda^{k+1} N^*_{T/Y} \oplus \Lambda^k \widehat{N}.
\end{equation}
We can describe explicitly $\left( \mathcal{P}_{\sigma^{\vphantom{a}}} \right)_{\, | Y}$ in $\M{D}^{\M{b}}\left(\overline{T}_Y\right)$: the natural map $\M{F}_1 \left( \Lambda^k \widehat{N} \right) \longrightarrow \left( \mathcal{P}_{\sigma^{\vphantom{a}}} \right)_{-k\, | \,Y}$ given via (\ref{gr1}) by the inclusion on the last factor and zero on the first factor is $\oo_{\overline{T}_Y}$-linear. It follows that we have a quasi-isomorphism in $\M{D}^{\M{b}}\left(\overline{T}_Y\right)$:
\begin{equation} \label{yey}
\left( \mathcal{P}_{\sigma^{\vphantom{a}}} \right)_{\, | Y}\simeq\bigoplus_{k \geq 0} \, \M{F}_1  \left( \Lambda^k \widehat{N} \right) \, [k].
\end{equation}
It is important to remark that this isomorphism is \textit{not} induced by a morphism of complexes. 
\section{Formality in the split case}
Let us define a morphism
\[
\Psi_{\sigma} \,\colon \, j_{Y/Z}^*\, j_{X/Z*} \, \oo_{X} \simeq j_{Y/Z}^*\, j_{X/Z*} \, \mathcal{P}_{\sigma} \longrightarrow \left( \mathcal{P}_{\sigma^{\vphantom{a}}} \right)_{\, | Y}\simeq\bigoplus_{k \geq 0} \, \M{F}_1  \left( \Lambda^k \widehat{N} \right) \, [k]
\]
in $\M{D}^{\M{b}}(Y)$ using (\ref{yey}).  If (\ref{excess}) splits, then for any positive integer $k$, any splitting of this sequence yields an isomorphism between $\M{F}_1  \left( \Lambda^k \widehat{N} \right)$ and $\bigoplus_{\ell=1}^k \Lambda^{\ell} \mathcal{E} \otimes \Lambda^{k-\ell} N^*_{T/Y}$.

\begin{proposition} Let $(X, \sigma)$, $Y$, $Z$ be as above; and assume that the excess exact sequence \emph{(\ref{excess})} splits. Then the composite morphism
\[
\Theta_{\sigma} \, \colon \, j_{Y/Z}^*\, j_{X/Z*} \, \oo_{X} \overset{\Psi_{\sigma}}{\longrightarrow} \bigoplus_{k \geq 0} \, \M{F}_1  \left( \Lambda^k \widehat{N} \right) \, [k]
\longrightarrow \mathfrak{s} \left(\mathcal{E}\right)
\]
is an isomorphism in $\M{D}^{\M{b}}(Y)$.
\end{proposition}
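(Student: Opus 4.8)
The plan is to prove that $\Theta_\sigma$ is a quasi-isomorphism, i.e. that it induces an isomorphism on every cohomology sheaf. Since a morphism of $\M{D}^{\M{b}}(Y)$ is invertible precisely when it is a quasi-isomorphism, and since both source and target are supported on $T$, this is a local statement that I can verify in the coordinates of \S\ref{linear}. Both complexes have cohomology concentrated in nonpositive degrees, with $\mathcal{H}^{-k}\bigl(j_{Y/Z}^* j_{X/Z*}\oo_X\bigr)=\M{Tor}^k_{\oo_Z}(\oo_X,\oo_Y)=\Lambda^k\mathcal{E}$ by the discussion of \S\ref{linear}, and $\mathcal{H}^{-k}\bigl(\mathfrak{s}(\mathcal{E})\bigr)=\Lambda^k\mathcal{E}$ tautologically. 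It therefore suffices to show that the induced map $\mathcal{H}^{-k}(\Theta_\sigma)\colon\Lambda^k\mathcal{E}\to\Lambda^k\mathcal{E}$ is an isomorphism for every $k$.

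I would factor this through the two arrows defining $\Theta_\sigma$. By the computation (\ref{yey}) the middle object satisfies $\mathcal{H}^{-k}\bigl((\mathcal{P}_\sigma)_{|Y}\bigr)=\M{F}_1(\Lambda^k\widehat{N}^*)$, so on cohomology $\Theta_\sigma$ is the composite of $\mathcal{H}^{-k}(\Psi_\sigma)\colon\Lambda^k\mathcal{E}\to\M{F}_1(\Lambda^k\widehat{N}^*)$ with the map $\M{F}_1(\Lambda^k\widehat{N}^*)\to\Lambda^k\mathcal{E}$ deduced from the chosen splitting of (\ref{excess}). The latter is the projection onto the deepest graded piece: writing $\widehat{N}^*\cong\mathcal{E}\oplus N^*_{T/Y}$ one gets $\M{F}_1(\Lambda^k\widehat{N}^*)=\bigoplus_{\ell=1}^k\Lambda^\ell\mathcal{E}\otimes\Lambda^{k-\ell}N^*_{T/Y}$, and the map retains only the summand $\ell=k$. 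The key observation is that the bottom piece $\Lambda^k\mathcal{E}=\M{F}_k(\Lambda^k\widehat{N}^*)$ is a canonical subsheaf of $\M{F}_1(\Lambda^k\widehat{N}^*)$, independent of the splitting, on which this projection restricts to the identity. Consequently the whole statement reduces to identifying $\mathcal{H}^{-k}(\Psi_\sigma)$ with the canonical inclusion $\Lambda^k\mathcal{E}=\M{F}_k(\Lambda^k\widehat{N}^*)\hookrightarrow\M{F}_1(\Lambda^k\widehat{N}^*)$; granting this, the composite is $\M{id}_{\Lambda^k\mathcal{E}}$ and we are done.

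The remaining and most delicate step is exactly this local identification of $\mathcal{H}^{-k}(\Psi_\sigma)$, and the difficulty is that $\mathcal{P}_\sigma$ is neither flat nor projective over $\oo_{\overline{X}_Z}$ and that the equivalence (\ref{yey}) is not realised by a morphism of complexes, so the map cannot simply be read off. My approach is to model $j_{Y/Z}^* j_{X/Z*}\oo_X$ locally by the Koszul resolution $\mathcal{K}[U]$ of $\oo_X$ attached to the regular sequence $x_1,\dots,x_p,t_1,\dots,t_r$, and to compute $\Psi_\sigma$ through the explicit comparison morphism $\gamma\colon\mathcal{K}[U]\to(\mathcal{P}_\tau)_{|U}$ of (\ref{morphisme}), where $\tau$ is the coordinate retraction. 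A class in $\M{Tor}^k=\Lambda^k\mathcal{E}$ is represented by a Koszul cocycle $dt_J$ with $J\subset\{1,\dots,r\}$ and $|J|=k$; by (\ref{morphisme}) one has $\gamma(dt_J)=(0,\,dt_J)$, whose image under restriction to $Y$ and the identification (\ref{yey}) is the element $dt_J\in\Lambda^k\mathcal{E}=\M{F}_k(\Lambda^k\widehat{N}^*)\subset\M{F}_1(\Lambda^k\widehat{N}^*)$. This exhibits $\mathcal{H}^{-k}(\Psi_\sigma)$ as the desired canonical inclusion. The one point to dispatch is the passage from the local retraction $\tau$ to the given quantization $\sigma$: via (\ref{change}) the complexes $\mathcal{P}_\tau$ and $\mathcal{P}_\sigma$ differ by a correction landing in the first factor $\Lambda^{k+1}N^*$, which is annihilated when one passes to the cohomology representative, so the leading term $dt_J$ is unchanged. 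Assembling these local computations over $T$ yields $\mathcal{H}^{-k}(\Theta_\sigma)=\M{id}_{\Lambda^k\mathcal{E}}$ for all $k$, whence $\Theta_\sigma$ is an isomorphism in $\M{D}^{\M{b}}(Y)$.
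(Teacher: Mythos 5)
Your proposal is correct and follows essentially the same route as the paper: both reduce to a local verification in the coordinates of \S\ref{linear}, model the derived restriction by the Koszul complex $\mathcal{K}[U]$, compute $\Psi_\sigma$ through the comparison morphism $\gamma$ of (\ref{morphisme}) for the coordinate retraction $\tau$, and then pass from $\tau$ to $\sigma$ via a local holomorphic connection and the isomorphism (\ref{change}). Your write-up merely makes explicit what the paper leaves implicit, namely the reduction to cohomology sheaves and the identification of the second arrow of $\Theta_\sigma$ as the splitting-induced projection restricting to the identity on the canonical bottom piece $\M{F}_k\left(\Lambda^k \widehat{N}^*\right)=\Lambda^k\mathcal{E}$.
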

\begin{proof}
Let us describe $\Psi_{\sigma}$ locally. For this we use the computations in local coordinates performed at the end of \S \ref{linear}. Taking the same notations, we see that we have a commutative diagram
\[
\xymatrix{
\mathfrak{s} \left( \mathcal{E} \right) \ar[r]^-\sim \ar[d]&  \mathcal{K}(U)_{|Y} \ar[d] \\
\bigoplus_{k \geq 0} \, \M{F}_1  \left( \Lambda^k \widehat{N} \right) \, [k] \ar[r]^-{\sim}& \left(\mathcal{P}_{\tau}\right)_{\, |\,U \cap Y}
}
\]
is an isomorphism in $\M{D}^{\M{b}}(Y \cap U)$, where $\tau$ is the local quantization of $X$ adapted to our choice of local holomorphic coordinates, and the right vertical arrow is given by (\ref{morphisme}). Then we can prove easily that the same results holds if we replace $\sigma$ by $\tau$. Indeed, if $U$ is small enough, we can pick a holomorphic connection on $N^*_{X/Z}$ on $U$; and we have a commutative diagram
\[
\xymatrix{ \bigoplus_{k \geq 0} \, \M{F}_1  \left( \Lambda^k \widehat{N} \right) \, [k] \ar[r]^-\sim \ar[d]_-{\M{id}}&
\left(\mathcal{P}_{\sigma}\right)_{\, |\,U \cap Y}  \ar[d]^{\sim}  \\
\bigoplus_{k \geq 0} \, \M{F}_1  \left( \Lambda^k \widehat{N} \right) \, [k] \ar[r]^-\sim
&\left(\mathcal{P}_{\tau}\right)_{\, |\,U \cap Y} }
\]
where the left vertical isomorphism is given by (\ref{change}). This yields the result.
\end{proof}
Remark that even if the excess sequence (\ref{excess}) if not split, we always an Atiyah morphism
\begin{equation} \label{esoterique}
\mathfrak{at}_{\,\sigma} \, \colon \, j_{Y/Z}^*\, j_{X/Z*} \, \oo_{X} \longrightarrow \mathcal{E}\, [1]
\end{equation}
which is the component of degree $-1$ of $\Psi_{\sigma}$. This morphism induces an isomorphism on the local cohomology sheaves in degree $-1$ of both sides.
\section{The reverse implication}
Let us now assume that $j_{Y/Z}^*\, j_{X/Z*}\, \oo_X$ is formal in $\M{D}^{\M{b}}(Y)$. Remark that we have a canonical morphism
\[
j_{Y/Z}^*\, j_{X/Z*}\, \oo_X \longrightarrow \oo_T \oplus \mathcal{E}\,[1]
\]
which is an isomorphism on the local cohomology sheaves of degrees $0$ and $-1$. The first component is given by the composition
\[
\mathfrak{p} \, \colon \, j_{Y/Z}^*\, j_{X/Z*} \, \oo_X \longrightarrow \M{H}^0 \left( j_{Y/Z}^*\right)\, \left\{j_{X/Z*} \left( \oo_X \right)\right\} \simeq j_{T/Y*} \, \oo_T
\]
and the second one is the Atiyah morphism (\ref{esoterique}). Therefore, if $\beta=\bigoplus_{k \geq 0} \beta_k$ is an isomorphism from $j_{Y/Z}^*\, j_{X/Z*} \oo_X$ to $\bigoplus_{k \geq 0} \Lambda^k \mathcal{E} [k]$, we can replace $
\beta_0$ by $\mathfrak{p}$ and $\beta_1$ by $\mathfrak{at}_{\, \sigma}$. At usual, we denote by $\mathcal{P}_{\sigma}$ the AK complex associated with $(X, \sigma)$. Let us consider the morphism
\begin{align*}
\mathfrak{m}& \, \colon \,\mathfrak{s} \left( \mathcal{E} \right)\, \lltens[\oo_Y]\, \mathfrak{s} \left( \mathcal{E} \right) \overset{\beta^{-1}\, \lltens[] \,\beta^{-1}}{\longrightarrow} \Bigl( j_{Y/Z}^*\, j_{X/Z*} \oo_X \Bigr) \, \lltens[\oo_Y] \, \Bigl( j_{Y/Z}^*\, j_{X/Z*} \oo_X \Bigr) \overset{\sim}{\longrightarrow} j_{Y/Z}^*\, \Bigl( \oo_X \lltens[\oo_Z] \oo_X \Bigr)\\
&\simeq j_{Y/Z}^*\, \Bigl( \oo_X \lltens[\oo_Z] \mathcal{P}_{\sigma} \Bigr) \overset{\sim}{\longrightarrow} j_{Y/Z}^*\, \left\{\mathfrak{s} \bigl( \widehat{N}^* \bigr)\right\} \longrightarrow j_{Y/Z}^*\, \left\{\oo_X \oplus \widehat{N}^* \right\} \longrightarrow \oo_T \oplus \mathcal{E}\,[1] \oplus \widehat{N}^*\,[1].
\end{align*}
Note that the last two arrows occurring in the expression of $\mathfrak{m}$ induce isomorphisms on local cohomology sheaves of degrees $0$ and $-1$.
Since $\mathcal{H}^{-1} \bigl( \oo_T\, \lltens[\oo_Y] \oo_T \bigr)$ is canonically isomorphic to $N^*_{T/Y}$, we obtain an isomorphism $
\mathcal{H}^{\,-1} (\mathfrak{m}) \, \colon \, \mathcal{E} \oplus \mathcal{E} \oplus N^*_{T/Y} \overset{\sim}{\longrightarrow} \mathcal{E} \oplus \widehat{N}
$ in $\M{Mod}(Y)$, hence in $\M{Mod}(T)$; we denote it $\mathfrak{n}$. For later use, let us precise that we take for the first factor $\mathcal{E}$ the term $\mathcal{E} \otimes  \oo_T$. Using the natural morphism $\oo_{Z} \longrightarrow j_{X/Z*}\,\oo_{X}$, we get a commutative diagram
\[
\xymatrix{ \oo_Y \lltens[\oo_Y] \, j_{Y/Z}^*\, \mathcal{P}_{\sigma}  \ar[r]^-\sim \ar[d]&
j_{Y/Z}^*\, \Bigl( \oo_Z \lltens[\oo_Z] \,\mathcal{P}_{\sigma} \Bigr)  \ar[d] \\
 j_{Y/Z}^* \,\oo_X  \, \lltens[\oo_Y]\,  j_{Y/Z}^*\, \,\mathcal{P}_{\sigma}  \ar[r]^-\sim
&j_{Y/Z}^*\, \Bigl( \oo_X \lltens[\oo_Z] \,\mathcal{P}_{\sigma} \Bigr) }
\]
which yields another commutative diagram, namely
\[
\xymatrix@R=0.8cm@C=1.6cm{ \mathcal{E} \ar[r]^-{\M{id}} \ar[d]_-{(0, \M{id}, 0)}&
\mathcal{E}  \ar[d]^-{(\M{id}, \alpha)} \\
\mathcal{E} \oplus \mathcal{E} \oplus N^*_{T/Y} \ar[r]^-{\mathfrak{n}}
&  \mathcal{E} \oplus \widehat{N}^*}
\]
We can do the same by switching the factors: using the morphism $\oo_{Z} \longrightarrow \mathcal{P}_{\sigma}$, we get the diagram
\[
\xymatrix{ j_{Y/Z}^*\, \oo_X \,  \lltens[\oo_Y] \oo_Y \ar[r]^-\sim \ar[d]&
j_{Y/Z}^*\, \Bigl( \oo_X \lltens[\oo_Z] \,\oo_Z \Bigr)  \ar[d] \\
 j_{Y/Z}^*\, \oo_X \, \lltens[\oo_Y] \, j_{Y/Z}^*\, \mathcal{P}_{\sigma} \ar[r]^-\sim
&j_{Y/Z}^*\, \oo_X \, \lltens[\oo_Z] \,\mathcal{P}_{\sigma} }
\]
which gives
\[
\xymatrix@R=0.8cm@C=1.6cm{ \mathcal{E} \ar[r]^-{\M{id}} \ar[d]_-{(\M{id}, 0, 0)}&
\mathcal{E}  \ar[d]^-{(\M{id}, 0)} \\
\mathcal{E} \oplus \mathcal{E} \oplus N^*_{T/Y} \ar[r]^-{\mathfrak{n}}
&  \mathcal{E} \oplus \widehat{N}^*}
\]
Therefore wee see that the composite map
\[
\mathcal{E} \oplus N^*_{T/Y}\simeq \{0\} \oplus \mathcal{E} \oplus N^*_{T/Y} \longrightarrow \mathcal{E} \oplus \mathcal{E} \oplus N^*_{T/Y}  \overset{\mathfrak{n}}{\longrightarrow} \mathcal{E} \oplus \widehat{N}^* \longrightarrow \widehat{N}^* 
\]
is an isomorphism whose first component is $\alpha$. This proves that $N^*_{T/Y}$ is a direct factor of $\mathcal{E}$ in $\widehat{N}^*$, so that (\ref{excess}) is holomorphically split.
\section{An alternative proof}
In this section, we sketch an alternative proof of the splitting of the excess exact sequence (\ref{excess}) when $j_{Y/Z}^*\, j_{X/Z*} \, \oo_X$ is formal in $\M{D}^{\M{b}}(Y)$. For the formalism of Grothendieck's six operations in our context, we refer the reader to \cite[\S 5.2]{HKR}.
\par \medskip
Recall that there are \textit{canonical} isomorphisms $\mathcal{H}^{-i} \left(j_{Y/Z}^*\, j_{X/Z*} \, \oo_X \right) \simeq \Lambda^i \mathcal{E}$. Therefore, if the derived intersection $j_{Y/Z}^*\, j_{X/Z*} \, \oo_X$ is formal, we can pick an isomorphism $\varphi$ between $\mathfrak{s} \left(\mathcal{E} \right)$ and $j_{Y/Z}^*\, j_{X/Z*} \, \oo_X$ and we can assume without loss of generality that the induced isomorphisms in local cohomology are the canonical ones.  If $d$ denotes the codimension of $T$ in $Y$, we have
\par \smallskip
\begin{align*}
&\M{Hom}_{\M{D}^{\M{b}}(Y)} \Bigl(j_{T/Y *}\, \mathfrak{s} \left( \mathcal{E} \right), j_{Y/Z}^* \, j_{X/Z*} \, \oo_X \Bigr)\\
\simeq \, \, \, &\M{Hom}_{\M{D}^{\M{b}}(T)} \Bigl(\mathfrak{s} \left( \mathcal{E} \right), j_{T/Y}^{!}\,\{ j_{Y/Z}^* \, j_{X/Z*} \, \oo_X \}\Bigr)\\
\simeq \, \, \, & \M{Hom}_{\M{D}^{\M{b}}(T)} \Bigl(\mathfrak{s} \left( \mathcal{E} \right), j_{T/Y}^{*}\,\{ j_{Y/Z}^* \, j_{X/Z*} \, \oo_X \} \, \lltens[\oo_T] \, j_{T/Y}^{!}\, \oo_Y\Bigr)\\
\simeq \, \, \, & \M{Hom}_{\M{D}^{\M{b}}(T)} \Bigl(\mathfrak{s} \left( \mathcal{E} \right), j_{T/X}^{*}\,\{ j_{X/Z}^* \, j_{X/Z*} \, \oo_X \} \, \lltens[\oo_T] \, j_{T/Y}^{!}\, \oo_Y\Bigr)\\
\simeq \, \, \, & \M{Hom}_{\M{D}^{\M{b}}(T)} \Bigl(\mathfrak{s} \left( \mathcal{E} \right), j_{T/X}^{*}\,\{\mathfrak{s} \left( N^*_{X/Z} \right) \} \, \lltens[\oo_T] \, \Lambda^d N_{T/Y} [-d]\Bigr)\\
\simeq \, \, \, & \M{Hom}_{\M{D}^{\M{b}}(T)} \Bigl(\mathfrak{s} \left( \mathcal{E} \right), \mathfrak{s} \bigl( \widehat{N}^*\bigr)\, \otimes \, \Lambda^d N_{T/Y} [-d]\Bigr)\\
\simeq \, \, \, & \bigoplus_{q-p\geq d} \M{Ext}^{q-p-d} \Bigl(\Lambda^p \mathcal{E},  \Lambda^q \widehat{N}^* \otimes \Lambda^d N_{T/Y} \Bigr)\\
\simeq \, \, \, & \bigoplus_{q-p\geq d} \M{Ext}^{q-p-d} \Bigl(\Lambda^p \mathcal{E} \otimes \Lambda^d N^*_{T/Y} , \Lambda^q \widehat{N}^* \Bigr).
\end{align*}
\par \smallskip
Using this decomposition, we write $\varphi=\sum_{p, q} \varphi_{p,q}$. Then for any nonnegative integer $p$ the morphism $\mathcal{H}^{-p}(\varphi)$ is determined by $\varphi_{p,\,p+d}$. Besides, it is possible to check using local holomorphic coordinates that for any $p$, the map $\varphi_{p, \,p+d}$ fits into the commutative diagram below:
\[
\xymatrix{ 
\M{F}_{p} \left(\Lambda^{p+d} \widehat{N}^* \right) \ar[d] \ar[r]
&\Lambda^{p+d} \widehat{N}^*\\
\M{Gr}_{p} \left(\Lambda^{p+d} \widehat{N}^* \right) \ar[ru]_-{\varphi_{p, \,p+d}}
&  }
\]
The reason for this is that we have normalised the action of the isomorphism $\varphi$ on the local cohomology sheaves. If $r$ denotes the rank of the excess bundle $\mathcal{E}$, then transpose of the map
\[
\mathcal{E}^*  \simeq  \Lambda^{r-1} \mathcal{E}  \otimes  \Lambda^r \mathcal{E}^*  \simeq   \bigl(\Lambda^{r-1} \mathcal{E}  \otimes  \Lambda^d N_{T/Y}^* \bigr)  \otimes  \Lambda^{r+d} \widehat{N} \xrightarrow{\varphi_{r-1, r-1+d} \,\otimes\, \M{id}} \Lambda^{r+d-1} \widehat{N}^*  \otimes   \Lambda^{r+d} \widehat{N}  \simeq \widehat{N}
\]
is a retraction of $\alpha$ qed.
\par \bigskip
This method allows to provide pathological examples:
\begin{proposition}
Assume that the natural surjection $\Lambda^d \widehat{N} ^* \otimes \Lambda^d N_{T/Y} \longrightarrow \oo_T$ does not split. Then $j_{Y/Z}^*\, j_{X/Z*} \, \oo_X$ does not live in the image of $\M{D}^{\M{b}}(T)$ in $\M{D}^{\M{b}}(Y)$.
\end{proposition}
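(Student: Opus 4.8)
The plan is to argue contrapositively: I will show that if $\vartheta:=j_{Y/Z}^*\,j_{X/Z*}\,\oo_X$ lies in the essential image of the pushforward $j_{T/Y*}\colon\M{D}^{\M{b}}(T)\to\M{D}^{\M{b}}(Y)$, then the natural surjection $s\colon\Lambda^d\widehat{N}^*\otimes\Lambda^d N_{T/Y}\to\oo_T$ splits holomorphically. The whole argument is organised around the canonical truncation morphism $\mathfrak{p}\colon\vartheta\to j_{T/Y*}\,\oo_T$ onto $\mathcal{H}^0(\vartheta)=\oo_T$ already used above, together with the identity
\[
j_{T/Y}^!\,\vartheta\simeq\mathfrak{s}\bigl(\widehat{N}^*\bigr)\otimes\Lambda^d N_{T/Y}\,[-d]
\]
established in the course of the alternative proof. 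Its $0$-th cohomology sheaf is canonically $\Lambda^d\widehat{N}^*\otimes\Lambda^d N_{T/Y}$, while the analogous computation gives $j_{T/Y}^!\,j_{T/Y*}\,\oo_T\simeq\mathfrak{s}\bigl(N^*_{T/Y}\bigr)\otimes\Lambda^d N_{T/Y}[-d]$, whose $\mathcal{H}^0$ is $\oo_T$.

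First I would apply $j_{T/Y}^!$ to $\mathfrak{p}$ and pass to degree-zero cohomology, which through the two identifications above produces a canonical morphism
\[
s'\colon\Lambda^d\widehat{N}^*\otimes\Lambda^d N_{T/Y}\longrightarrow\oo_T,
\]
defined with no hypothesis on $\vartheta$. A computation in the local holomorphic coordinates of \S\ref{linear}, normalising the two canonical identifications exactly as the diagram for $\varphi_{p,\,p+d}$ does in the alternative proof, should show that $s'$ agrees with the natural surjection $s$ up to an automorphism of $\oo_T$; in particular $s$ splits if and only if $s'$ does.

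Next I would exploit the hypothesis $\vartheta\simeq j_{T/Y*}\,\mathcal{G}$ to compute $s'$ a second way. Exactness and full faithfulness of $j_{T/Y*}$ force $\mathcal{H}^{-i}(\mathcal{G})\simeq\Lambda^i\mathcal{E}$ and identify $\mathfrak{p}$ with $j_{T/Y*}$ of the truncation $\widetilde{\mathfrak{p}}\colon\mathcal{G}\to\mathcal{H}^0(\mathcal{G})=\oo_T$. The projection formula for the closed embedding $j_{T/Y}$ furnishes a functorial isomorphism $j_{T/Y}^!\,j_{T/Y*}(-)\simeq(-)\otimes_{\oo_T}\mathfrak{s}\bigl(N^*_{T/Y}\bigr)\otimes\Lambda^d N_{T/Y}[-d]$, under which $j_{T/Y}^!\,\mathfrak{p}$ becomes $\widetilde{\mathfrak{p}}\otimes\M{id}$. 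Because $\mathfrak{s}\bigl(N^*_{T/Y}\bigr)\otimes\Lambda^d N_{T/Y}[-d]$ is a direct sum of shifted locally free sheaves, taking $\mathcal{H}^0$ breaks the source into the genuine direct sum $\bigoplus_{i+j=d}\Lambda^i\mathcal{E}\otimes\Lambda^j N^*_{T/Y}\otimes\Lambda^d N_{T/Y}$, and $\widetilde{\mathfrak{p}}\otimes\M{id}$ annihilates every summand with $i>0$ while restricting to an isomorphism on the summand $i=0$, which is $\Lambda^d N^*_{T/Y}\otimes\Lambda^d N_{T/Y}=\oo_T$. Hence $s'$ is the projection onto a direct summand, so it splits; combined with the previous step this splits $s$, contradicting the hypothesis.

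The step I expect to be the genuine obstacle is the identification $s'=s$ in the second paragraph: a priori $\M{Hom}\bigl(\Lambda^d\widehat{N}^*\otimes\Lambda^d N_{T/Y},\oo_T\bigr)$ is large, so one must check by an explicit local computation that the epimorphism produced by $\mathcal{H}^0(j_{T/Y}^!\,\mathfrak{p})$ is really the determinant of $\pi$ and not some other surjection that might split for unrelated reasons. Once that normalisation is in place the argument requires no positivity or vanishing input: the direct-sum decomposition coming from $j_{T/Y}^!\,j_{T/Y*}$ does all the work.
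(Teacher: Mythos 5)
Your argument runs in the opposite adjoint direction from the paper's. The paper transposes the canonical morphism $\oo_Y \to \vartheta = j_{T/Y*}\,\theta$ across the adjunction $(j_{T/Y}^*, j_{T/Y*})$ to manufacture a morphism $j_{T/Y*}\,\oo_T \to \vartheta$ inducing the canonical isomorphism on $\mathcal{H}^0$, and then reads off from the $\M{Hom}$ computation that its component in $\M{H}^0\bigl(\Lambda^d\widehat{N}^*\otimes\Lambda^d N_{T/Y}\bigr)$ is a global section mapping to $1$, i.e.\ a splitting of $s$. You instead apply $j_{T/Y}^!$ to the truncation $\mathfrak{p}\colon\vartheta\to j_{T/Y*}\,\oo_T$ and argue that $\mathcal{H}^0\bigl(j_{T/Y}^!\,\mathfrak{p}\bigr)$ is the projection onto a direct summand. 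Both proofs rest on the same adjunction calculation, and both must normalise a degree-zero component against the canonical identification $\mathcal{H}^{-d}\bigl(j_{T/Y}^*\,\vartheta\bigr)\simeq\Lambda^d\widehat{N}^*$ by a local computation; the step $s'=s$ that you flag is of exactly the same nature as the paper's unproved assertion that the relevant component ``must map to $1$'', so I do not hold that against you.

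The genuine gap is elsewhere: the isomorphism $j_{T/Y}^!\,j_{T/Y*}(-)\simeq(-)\otimes\mathfrak{s}\bigl(N^*_{T/Y}\bigr)\otimes\Lambda^d N_{T/Y}[-d]$ is \emph{not} furnished by the projection formula. The projection formula only yields $(-)\,\lltens[\oo_T]\,j_{T/Y}^*\,j_{T/Y*}\,\oo_T\otimes\Lambda^d N_{T/Y}[-d]$; replacing $j_{T/Y}^*\,j_{T/Y*}\,\oo_T$ by its formal model $\mathfrak{s}\bigl(N^*_{T/Y}\bigr)$ is precisely the HKR-formality statement for the embedding of $T$ in $Y$, which by Arinkin--C\u{a}ld\u{a}raru fails for a general closed embedding. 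Your proof needs it essentially: without it the triangle $\tau^{\leq-1}\mathcal{G}\to\mathcal{G}\to\oo_T$ tensored with $j_{T/Y}^!\,j_{T/Y*}\,\oo_T$ does not even give surjectivity of $\mathcal{H}^0\bigl(\widetilde{\mathfrak{p}}\otimes\M{id}\bigr)$, let alone a section, and by your own account the direct-sum decomposition ``does all the work.'' The gap is repairable in the setting of the proposition: by \S\ref{cart} the quantization $\sigma$ of $X$ in $Z$ makes $\oo_{\overline{T}_Y}$ a quotient of the $\oo_T$-algebra $\widehat{\oo}_T$, so $T$ inherits a quantization in $Y$ and the quantized HKR isomorphism gives $j_{T/Y}^*\,j_{T/Y*}\,\oo_T\simeq\mathfrak{s}\bigl(N^*_{T/Y}\bigr)$. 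You must invoke this explicitly; note that the paper's own proof is arranged so as never to need it.
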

\begin{proof} Assume that there exist an element  $\theta$ in $\M{D}^{\M{b}}(T)$ such that $j_{Y/Z}^*\, j_{X/Z*} \, \oo_{X}=j_{T/Y*}\, \theta$. There is a canonical morphism $\oo_Y \simeq j_{Y/Z}^* \, \oo_Z \longrightarrow j_{Y/Z}^*\, j_{X/Z*} \, \oo_{X}$ inducing the natural morphism $\oo_Y \longrightarrow \oo_T$ on the local cohomology sheaves of degree $0$. Then using the chain
\[
\M{Hom}_{\M{D}^{\M{b}}(Y)} \Bigl(\oo_Y, j_{T/Y*} \, \theta \Bigr) \simeq \M{Hom}_{\M{D}^{\M{b}}(T)} \Bigl( j_{T/Y}^* \,\oo_Y, \theta \Bigr) \longrightarrow \M{Hom}_{\M{D}^{\M{b}}(Y)} \Bigl( j_{T/Y*}\, \oo_T, j_{Y/Z}^*\, j_{X/Z*} \, \oo_{X} \Bigr)
\]
we get a morphism from $\oo_T$ to $j_{Y/Z}^*\, j_{X/Z*} \, \oo_{X}$ inducing the canonical isomorphism in local cohomology of degree $0$. Thanks to the previous calculation:
\[
\M{Hom}_{\M{D}^{\M{b}}(Y)} \Bigl( j_{T/Y*}\, \oo_T, j_{Y/Z}^*\, j_{X/Z*} \, \oo_{X} \Bigr) \simeq \bigoplus_{q \geq d} \M{H}^{q-d} \bigl( \Lambda^{q} \widehat{N}^* \otimes \Lambda^{d} N_{T/Y} \bigr).
\]
The component in $\M{H}^{0} \bigl( \Lambda^{d} \widehat{N}^* \otimes \Lambda^{d} N_{T/Y} \bigr)$ must map to $1$ via the map $\Lambda^{d} \widehat{N}^* \otimes \Lambda^{d} N_{T/Y} \longrightarrow \oo_T$, which contradicts our assumption.
\end{proof}

\section{The formality criterion for quantization}
In this section, we prove Theorem \ref{QC}. This is in fact a straightforward consequence of Theorem \ref{boss}. Indeed, if $X$ is a closed submanifold of $Y$, the cycles $\Delta_Y$ and $X^2$ are smooth submanifolds of $Y^2$ intersecting linearly along $\Delta_X$. Remark that $\Delta_Y$ can be obviously quantized (for instance by one of the two projections), and that the excess exact sequence (\ref{excess}) on $\Delta_X$ is the conormal exact sequence
\[
0 \longrightarrow N^*_{X/Y} \longrightarrow \Omega^1_{Y |X} \longrightarrow \Omega^1_X \longrightarrow 0.
\]
Therefore Theorem \ref{boss} yields the result. To give a clearer picture of this result, we can draw the following diagram:

\begin{center}
\begin{tikzpicture}[>=latex,inner sep=0pt]
\node[above left,ellipse,draw] (bulle haute) at (0cm,1.5cm) {\begin{tabular}{@{}c@{}}
             No necessary and \\
             sufficient condition is\\
             known for the formality \\
             of $\oo_{X} \, \lltens[\oo_{Y}] \, \oo_{X}$. \\
             \end{tabular}};
\node[above=of bulle haute,ellipse,draw] (bulle tres haute) {\begin{tabular}{@{}c@{}}
             $j_{X/Y}^* \, \oo_X$  \\
              is formal if and only  \\
              if the conormal bundle \\
              $N^*_{X/Z}$ extends to $\overline{X}_Z$\\
             \end{tabular}};
\node[below=of bulle haute,yshift=-1cm,ellipse,draw] (bulle basse) {\begin{tabular}{@{}c@{}}
             $j_{X^2/Y^2}^* \, \oo_{\Delta_{Y}}$\\
              is formal if and only \\
              if the conormal exact \\
              sequence of $(X,Y)$ \\
          splits.\\
             \end{tabular}};
\node[right,ellipse,draw] (bulle droite) at (2cm,0cm) {\begin{tabular}{@{}c@{}}
             $\oo_{X^2} \, \lltens[\oo_{Y^2}] \, \oo_{\Delta_Y}$\\
             \end{tabular}};
\node[right,xshift=-2cm] at ($(bulle haute.west)!(bulle tres haute)!(bulle haute.west)$) {$\M{D}^{\M{b}}(X)$};
\node[right,xshift=-2cm] at (bulle haute.west) {$\M{D}^{\M{b}}(Y)$};
\node[right,xshift=-2cm] at ($(bulle haute.west)!(bulle basse)!(bulle haute.west)$) {$\M{D}^{\M{b}} \left(X^2 \right)$};
\node[right=of bulle droite,xshift=-0.75cm] {$\M{D}^{\M{b}} \left(Y^2 \right)$};
\draw[->] (bulle tres haute) -- (bulle haute) node[pos=0.5,right=0.2cm] {$j_{X/Y*}$};
\draw[->] (bulle haute) -- (bulle droite) node[pos=0.5,above right=0.2cm] {$j_{Y/Y^2\,*}$};
\draw[->] (bulle basse) -- (bulle droite) node[pos=0.5,below right=0.2cm] {$j_{X^2/Y^2\,*}$};
\end{tikzpicture}
\end{center}
\section{Perspectives}
In view of the preceding discussion, we propose a very natural conjecture, which predicts the formality of \textit{all} derived self-intersections in the bounded derived category of the ambient complex manifold.
\begin{conjecture} For any pair $(X, Y)$, the objet $\oo_{X} \, \lltens[\oo_{Y}] \, \oo_{X}$ is always formal in $\M{D}^{\M{b}}(Y)$.
\end{conjecture}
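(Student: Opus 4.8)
The plan is to prove the conjecture by degenerating $(X,Y)$ to the pair formed by $X$ and the total space of its normal bundle, where a $\mathbb{C}^{*}$-action forces formality, and then restricting back to $Y$. Write $N=N_{X/Y}$; since $\mathcal{H}^{-i}\bigl(\oo_{X}\lltens[\oo_{Y}]\oo_{X}\bigr)\simeq\Lambda^{i}N^{*}$, the only candidate formal model is $\mathfrak{s}(N^{*})$. I would first note that the statement is purely global: over any open set on which $N^{*}$ lifts to the first formal neighbourhood, which always holds locally, the Arinkin--C\u{a}ld\u{a}raru theorem yields a local formality isomorphism, and hence so does its pushforward to $Y$. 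The obstruction to formality is therefore a global gluing datum, and the whole problem is to produce a \emph{global} mechanism that annihilates it.

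The conceptual engine is an equivariant formality lemma. Consider the zero section $X$ in $\M{Tot}(N)$ with the fibrewise scaling $\mathbb{C}^{*}$-action, which fixes $X$ and scales the fibre coordinates with weight $-1$. The object $\oo_{X}\lltens[\oo_{\M{Tot}(N)}]\oo_{X}$ is $\mathbb{C}^{*}$-equivariant and its cohomology sheaf $\Lambda^{i}N^{*}$ is pure of weight $-i$. Splitting the equivariant Postnikov tower inductively from the top, at each stage one meets an extension class in $\M{Ext}^{k}\bigl(\Lambda^{p}N^{*},\Lambda^{q}N^{*}\bigr)$ with $p<q$ and $k\geq2$; being equivariant this class lies in the weight-zero part, whereas the group is pure of weight $p-q\neq0$, so it vanishes. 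Hence the object is formal. Equivalently, the zero section is quantized by the bundle projection and one may invoke the quantized Hochschild--Kostant--Rosenberg isomorphism of \S\ref{AK}. The virtue of the weight argument is that it uses nothing but the grading, so it should persist through a degeneration that carries a compatible $\mathbb{C}^{*}$-action.

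To globalize I would form the deformation to the normal cone $\mathcal{M}\to\mathbb{A}^{1}$, flat, with $X\times\mathbb{A}^{1}\hookrightarrow\mathcal{M}$, general fibre the inclusion $X\subset Y$ and special fibre the zero section in $\M{Tot}(N)$. In the chart $v=tw$ of the underlying blow-up the action $t\mapsto\lambda t$ lifts to $w\mapsto\lambda^{-1}w$, so $X\times\mathbb{A}^{1}=\{w=0\}$ is invariant and the relative conormal bundle, generated by the $dw$, is pure of weight $-1$; thus $\Lambda^{i}$ of it is pure of weight $-i$ along the whole of $\mathcal{M}$. Running the weight argument of the previous paragraph on $\mathcal{M}$ shows that the relative derived self-intersection $\oo_{X\times\mathbb{A}^{1}}\lltens[\oo_{\mathcal{M}}]\oo_{X\times\mathbb{A}^{1}}$ is formal in $\M{D}^{\M{b}}(\mathcal{M})$. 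Restricting along the inclusion $\iota_{1}\colon Y\hookrightarrow\mathcal{M}$ of the fibre over $t=1$ is then harmless, because derived pullback is monoidal and $\iota_{1}^{*}\,\oo_{X\times\mathbb{A}^{1}}\simeq\oo_{X}$ by transversality, so $\iota_{1}^{*}$ carries the relative object to $\oo_{X}\lltens[\oo_{Y}]\oo_{X}$ and its formal model to $\mathfrak{s}(N^{*})$. This would give the formality on $Y$.

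The main obstacle will not be the restriction step, which is formal, but the equivariant foundations on $\mathcal{M}$. I must construct the deformation to the normal cone in the analytic category together with its $\mathbb{C}^{*}$-action, and, more delicately, show that the relative self-intersection is a genuinely $\mathbb{C}^{*}$-equivariant bounded coherent complex whose cohomology sheaves carry \emph{exactly} the pure weights computed above, so that the equivariant formality lemma applies verbatim; this is where the linearity of the intersection and an equivariant Koszul resolution of $\oo_{X\times\mathbb{A}^{1}}$ have to be combined carefully. As an independent check, and a fallback should the degeneration prove recalcitrant, I would use the reduction to the diagonal of \S\ref{cart}: from $\oo_{X}\lltens[\oo_{Y}]\oo_{X}\simeq\Delta_{Y}^{*}\,\oo_{X^{2}}$ and $\oo_{X^{2}}\lltens[\oo_{Y^{2}}]\oo_{\Delta_{Y}}\simeq j_{X^{2}/Y^{2}*}\bigl(j_{X^{2}/Y^{2}}^{*}\,\oo_{\Delta_{Y}}\bigr)$, Theorem \ref{boss} controls the restriction of the object to $X^{2}$, and the conjecture becomes the assertion that the closed pushforward $j_{X^{2}/Y^{2}*}$ turns the possibly non-formal object of \S\ref{linear} into a formal one, the failure of the excess sequence to split being absorbed into the normal directions of $X^{2}$ in $Y^{2}$. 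I expect this second viewpoint to be a good source of test cases but harder to complete than the equivariant degeneration.
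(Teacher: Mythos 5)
Before anything else: the statement you are proving is stated in the paper as a \emph{conjecture}. The author gives no proof of it; the only evidence offered is the proposition that the two-step truncations $\tau^{\geq -(p+1)}\,\tau^{\leq -p}\bigl(\oo_{X}\lltens[\oo_{Y}]\oo_{X}\bigr)$ are formal, which is proved by exploiting the ring structure on $j_{*}\vartheta$ in $\M{D}^{\M{b}}(Y)$ and antisymmetrization. So there is no proof in the paper to compare yours against; you are attempting an open problem, and unfortunately your argument has a genuine gap at its central step.

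The gap is the purity claim on the deformation to the normal cone $\mathcal{M}\to\mathbb{A}^{1}$. A weight argument of the kind you describe only applies where the relevant objects are supported on (or restricted to) the \emph{fixed locus} of the $\mathbb{C}^{*}$-action: weights of an equivariant sheaf are well defined fibrewise only over fixed points, and the statement ``$\M{Ext}^{k}(\Lambda^{p},\Lambda^{q})$ is pure of weight $p-q$'' fails as soon as the support moves. Here $X\times\mathbb{A}^{1}$ is not fixed by the action (the action scales $t$), and over the open locus $t\neq 0$ the pair $\bigl(X\times\mathbb{C}^{*},\,\mathcal{M}|_{t\neq 0}\bigr)$ is equivariantly just $(X,Y)\times\mathbb{C}^{*}$ with $\mathbb{C}^{*}$ acting on the second factor. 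Consequently $\M{Ext}^{k}_{\mathcal{M}|_{t\neq 0}}\bigl(\Lambda^{p}\mathcal{N}^{*},\Lambda^{q}\mathcal{N}^{*}\bigr)\simeq\M{Ext}^{k}_{Y}\bigl(\Lambda^{p}N^{*},\Lambda^{q}N^{*}\bigr)\otimes\mathbb{C}[t,t^{-1}]$ contains every weight, and its weight-zero (equivariant) part is exactly the original obstruction group on $Y$, untouched. The degeneration transports the obstruction; it does not kill it. A decisive sanity check is that your mechanism proves too much: the identical argument, restricting the relative self-intersection to the fibre $X\times\{1\}$ instead of $Y$, would show that $j_{X/Y}^{*}\,j_{X/Y*}\,\oo_{X}$ is \emph{always} formal in $\M{D}^{\M{b}}(X)$, contradicting the Arinkin--C\u{a}ld\u{a}raru theorem quoted in the introduction, which gives a nontrivial necessary and sufficient condition (extension of $N^{*}_{X/Y}$ to the first formal neighbourhood). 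The entire content of the conjecture is that the pushforward to $Y$ might absorb the non-formality that survives on $X$, and your degeneration is insensitive to that distinction. Your fallback via reduction to the diagonal is likewise only a reformulation: Theorem \ref{boss} controls $j_{X^{2}/Y^{2}}^{*}\,\oo_{\Delta_{Y}}$ in $\M{D}^{\M{b}}(X^{2})$, and the conjecture is precisely the (unproved) assertion that applying $j_{X^{2}/Y^{2}*}$ restores formality. If you want to make progress along the paper's own lines, the productive direction is the multiplicative structure: the ring object $j_{*}\vartheta$ and the antisymmetrization maps already give formality of all length-two truncations, and the open question is whether this can be bootstrapped to the full Postnikov tower.
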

As an evidence for this conjecture, we provide the following formality result for truncated objects of length two (which corresponds in classical terms to the degeneracy at the page $\M{E}_2$ of a spectral sequence): 
\begin{proposition}
For any nonnegative integer $p$, $\tau^{\geq-(p+1)} \, \tau^{\leq -p} \,\Bigl(\oo_{X} \, \lltens[\oo_{Y}] \, \oo_{X}\Bigr)$ is formal in $\M{D}^{\M{b}}(Y)$.
\end{proposition}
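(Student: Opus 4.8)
The plan is to reduce the assertion to the vanishing of a single cohomology class, and then to kill that class by a symmetry argument. Write $j=j_{X/Y}$ and $N^*=N^*_{X/Y}$. By the projection formula $\oo_X \lltens[\oo_Y] \oo_X \simeq j_*\bigl(j^*j_*\oo_X\bigr)$, and since $j_*$ is exact it commutes with the truncation functors; it therefore suffices to prove that
\[
F_p:=\tau^{\geq-(p+1)}\,\tau^{\leq-p}\bigl(j^*j_*\oo_X\bigr)
\]
is formal in $\M{D}^{\M{b}}(X)$, for then applying $j_*$ yields the statement in $\M{D}^{\M{b}}(Y)$. Recall that $\mathcal{H}^{-i}\bigl(j^*j_*\oo_X\bigr)\simeq \Lambda^i N^*$. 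Thus $F_p$ has exactly two nonzero cohomology sheaves, $\Lambda^p N^*$ in degree $-p$ and $\Lambda^{p+1}N^*$ in degree $-(p+1)$, and sits in a canonical triangle
\[
\Lambda^{p+1}N^*[p+1] \longrightarrow F_p \longrightarrow \Lambda^{p}N^*[p] \xrightarrow{\ \omega_p\ } \Lambda^{p+1}N^*[p+2];
\]
the object $F_p$ is formal if and only if the class $\omega_p\in\M{Ext}^2_{\oo_X}\bigl(\Lambda^p N^*,\Lambda^{p+1}N^*\bigr)$ vanishes. By \cite{AC} the full object need not be formal, so some higher obstruction is genuinely nonzero; the content here is exactly that this \emph{first} obstruction always vanishes, which is the degeneration at $\M{E}_2$.

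Next I would exploit multiplicativity to reduce all the $\omega_p$ to a single class. Since $j^*$ is symmetric monoidal and $j_*\oo_X$ is a commutative $\oo_Y$-algebra, $j^*j_*\oo_X$ is a commutative $\oo_X$-algebra object of $\M{D}^{\M{b}}(X)$, with cohomology ring $\Lambda^\bullet N^*$. The classes $\omega_p$ are precisely the $d_2$-differentials of the multiplicative spectral sequence attached to the canonical $t$-structure filtration of this algebra object, and multiplicativity forces $d_2$ to be a derivation of the bigraded algebra $\Lambda^\bullet N^*$. As $\Lambda^\bullet N^*$ is generated as an algebra by $N^*=\Lambda^1 N^*$, this derivation is determined by its restriction $D:=\omega_1\colon N^*\to\Lambda^2 N^*$, an element of $\M{Ext}^2_{\oo_X}(N^*,\Lambda^2 N^*)$, through the Leibniz rule
\[
\omega_p(n_1\wedge\cdots\wedge n_p)=\sum_{i}\,\pm\,n_1\wedge\cdots\wedge D(n_i)\wedge\cdots\wedge n_p.
\]
Hence it is enough to prove that $D=0$.

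To do this I would compute $D$ in the local coordinates of \S\ref{linear}. If $X=\{x_1=\cdots=x_c=0\}$ locally, then $j_*\oo_X$ is resolved by the Koszul complex on $(x_1,\dots,x_c)$, and $j^*$ (restriction to $X$) produces $(\Lambda^\bullet N^*,0)$ with vanishing differential, so that $F_p$ is locally formal and locally trivialized by the constructions of the split case (via formula \eqref{change}). The obstruction $D$ to globalizing these local splittings is then represented by the \emph{second-order} part of the transition data, namely the second fundamental form of $X$ in $Y$, which is symmetric in its two conormal arguments. Concretely, $D$ is the image of the second-fundamental-form class, which is valued in $S^2N^*$, under the map $\M{Ext}^2(N^*,S^2N^*)\to\M{Ext}^2(N^*,\Lambda^2N^*)$ induced by the composite $S^2N^*\hookrightarrow N^*\otimes N^*\twoheadrightarrow\Lambda^2N^*$. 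Since this composite is identically zero, $D=0$; by the previous step every $\omega_p$ vanishes, each $F_p$ is formal in $\M{D}^{\M{b}}(X)$, and applying $j_*$ proves the proposition.

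The hard part will be the last step: identifying $D$ cleanly with the symmetric second-order datum. The local retractions $\sigma_\alpha$ trivialize $F_p$ chart by chart, but the differences $\sigma_\alpha-\sigma_\beta$ entering the transition isomorphisms are not symmetric, and one must verify that only their symmetric second-order combination survives in the $\M{E}_2$-differential — that is, that the first-order ambiguities, which encode the already understood splittings of the lower truncations, contribute nothing to $D$. Carrying out this Čech computation carefully, together with a rigorous check of the multiplicativity of the relevant spectral sequence, is where the genuine work lies.
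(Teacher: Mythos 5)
Your very first reduction is fatal. You propose to prove that $F_p=\tau^{\geq-(p+1)}\tau^{\leq-p}\bigl(j^*j_*\oo_X\bigr)$ is formal in $\M{D}^{\M{b}}(X)$ and then apply $j_*$. It is true that $j_*$ is exact, commutes with truncation, and preserves formality, so the implication you invoke is valid; but the statement you are reducing to is false in general. Already for $p=1$, the object $\tau^{\geq-2}\tau^{\leq-1}\,j^*j_*\oo_X$ is formal in $\M{D}^{\M{b}}(X)$ if and only if $N^*_{X/Y}$ extends to a holomorphic vector bundle on the first formal neighbourhood $\overline{X}_Y$ --- this is the content of Arinkin--C\u{a}ld\u{a}raru's theorem, and it is precisely the point of the remark that follows the proof in the paper. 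Consequently the class you call $D=\omega_1\in\M{Ext}^2_{\oo_X}\bigl(N^*_{X/Y},\Lambda^2 N^*_{X/Y}\bigr)$ does \emph{not} vanish in general, and your identification of it with the image of a symmetric second-fundamental-form class under the (zero) composite $S^2N^*\to N^*\otimes N^*\to\Lambda^2 N^*$ cannot be correct; this is exactly where the \v{C}ech computation you defer would break down. The whole content of the proposition is that formality holds in $\M{D}^{\M{b}}(Y)$ while failing in $\M{D}^{\M{b}}(X)$: $j_*$ is not faithful, and the pushforward is not a harmless bookkeeping step but the very place where the obstruction dies.

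The paper therefore never leaves $\M{D}^{\M{b}}(Y)$. The key step is to split $\tau^{\geq-2}\,j_*\vartheta$ (where $\vartheta=j^*j_*\oo_X$) by means of the composite $\oo_X\,\lltens[\oo_Y]\,\oo_X\xrightarrow{\mathfrak{at}\,\otimes\,\mathfrak{at}}N^*_{X/Y}[1]\,\lltens[\oo_Y]\,N^*_{X/Y}[1]\xrightarrow{\wedge}\Lambda^2N^*_{X/Y}[2]$, which induces an isomorphism on $\mathcal{H}^{-2}$; this map is built from the tensor square of the Atiyah morphism \emph{over} $\oo_Y$ and has no analogue over $\oo_X$. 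The general $p$ is then deduced from the case $p=1$ by exploiting the ring structure on $j_*\vartheta$ and antisymmetrization, which is close in spirit to your derivation/multiplicativity step; that part of your outline is salvageable, but it must be run in $\M{D}^{\M{b}}(Y)$, where the degree-one obstruction actually vanishes, rather than in $\M{D}^{\M{b}}(X)$, where it does not.
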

\begin{proof} Let $j=j_{X/Y}$ and $\vartheta=j^* \, j_{*} \, \oo_X$. The Atiyah morphism $\mathfrak{at}\, \colon \, j_* \,\oo_X \longrightarrow j_* N^*_{X/Y}[1]$ in the derived category $\M{D}^{\M{b}}(Y)$ yields by adjunction a morphism $\vartheta \longrightarrow N^*_{X/Y}[1]$, which induces an isomorphism in local cohomology of degree $-1$. Therefore $\tau^{\geq -1} \vartheta \simeq \oo_X \oplus N^*_{X/Y}[1]$ in $\M{D}^{\M{b}}(X)$. We can go a step further in $\M{D}^{\M{b}}(Y)$: let us consider the morphism
\[
j_* \, \vartheta \simeq \oo_{X} \,  \lltens[\oo_{Y}] \, \oo_{X} \xrightarrow{\mathfrak{at} \,\otimes \,\mathfrak{at}} N^*_{X/Y}[1] \,  \lltens[\oo_{Y}] \, N^*_{X/Y}[1] \xrightarrow{\, \, \, \, \wedge \, \, \, \,} \Lambda^2 N^*_{X/Y}[2].
\]
It induces an isomorphism in local cohomology of degree $-2$. Therefore 
\[
\tau^{\geq -2} j_*\, \vartheta \simeq \oo_X \oplus N^*_{X/Y}[1] \oplus \Lambda^2 N^*_{X/Y}[2]
\]
in $\M{D}^{\M{b}}(Y)$. Recall now that $j_* \, \vartheta$ is a ring object in $\M{D}^{\M{b}}(Y)$, the ring structure being given by the composition of morphisms
\[
j_* \,\vartheta \,  \lltens[\oo_{Y}] \,j_*\, \vartheta \longrightarrow j_*\, (\vartheta \,  \lltens[\oo_{X}] \, j^* \, j_*\, \vartheta) \longrightarrow j_*\, (\vartheta \,  \lltens[\oo_{X}] \, \vartheta) \simeq j_*\, j^*\, \Bigl(\oo_X \,  \lltens[\oo_{Y}] \, \oo_X \Bigr) \longrightarrow j_*\, \vartheta.
\]
Let us consider the morphism $\bigl( \tau^{\leq -1} j_* \vartheta \bigr)^{\otimes p} \longrightarrow \bigl(j_* \vartheta \bigr)^{\otimes p} \longrightarrow j_* \, \vartheta$ where the last arrow is the ring multiplication. Since the left-hand side is concentrated in degrees at most $-p$, this morphism can be factored through $\tau^{\leq -p} j_*\, \vartheta$. Applying the truncation functor $\tau^{\geq -{p+1}}$, we get a morphism
\[
\tau^{\geq -{p+1}} \bigl( \tau^{\leq -1} j_* \vartheta \bigr)^{\otimes p} \longrightarrow \tau^{\geq -{p+1}} \tau^{\leq -p} j_*\, \vartheta.
\]
Remark now that the natural morphism $\tau^{\geq -{p+1}} \bigl( \tau^{\leq -1} j_* \vartheta \bigr)^{\otimes p}              \longrightarrow
\tau^{\geq -{p+1}} \bigl( \tau^{\geq -2} \tau^{\leq -1} j_* \vartheta \bigr)^{\otimes p} $ is an isomorphism, so that we obtain a morphism
\[
\theta \, \colon \, \tau^{\geq -{p+1}} \bigl( \tau^{\geq -2}\, \tau^{\leq -1} \,j_* \,\vartheta \bigr)^{\otimes p} \longrightarrow \tau^{\geq -{p+1}} \tau^{\leq -p} j_*\, \vartheta.
\]
We have seen that $\tau^{\geq -2}\, \tau^{\leq -1}\, j_*\, \vartheta$ is formal; this yields
\[
\tau^{\geq -{p+1}} \bigl( \tau^{\geq -2}\, \tau^{\leq -1}\, j_*\, \vartheta \bigr)^{\otimes p} \simeq \Bigl(N^*_{X/Y}[1]\Bigr)^{\otimes p} \oplus \left\{ \Bigl(N^*_{X/Y}[1]\Bigr)^{\otimes p-1} \otimes \Lambda^2\, N^*_{X/Y} [2] \right\}.
\]
Besides, in local cohomology, $\theta$ is simply the wedge-product. Using the antisymmetrization morphisms (see \cite[\S 2.1]{HKR}), we obtain a quasi-isomorphism
\[
\Lambda^p \, N^*_{X/Y} [p] \oplus \Lambda^{p+1}\, N^*_{X/Y} [p+1] \longrightarrow \tau^{\geq -{p+1}} \bigl( \tau^{\geq -2} \tau^{\leq -1} j_* \vartheta \bigr)^{\otimes p} \xrightarrow{\, \, \, \theta \, \, \, } \tau^{\geq -{p+1}} \tau^{\leq -p} j_*\, \vartheta
\]
\end{proof}
\begin{remark} The crucial point in this proof is that $\tau^{\geq -2} \tau^{\leq -1} \,j_* \,\vartheta$ is formal in $\M{D}^{\M{b}}(Y)$. However, $\tau^{\geq -2} \tau^{\leq -1} \,\vartheta$ is not always formal in $\M{D}^{\M{b}}(X)$. In fact, this object is formal if and only if $N^*_{X/Y}$ extends at the first order (see \cite{AC}). This reflects the fact that $j_*$ is not faithful.
\end{remark}
\newpage
\bibliographystyle{plain}
\bibliography{bib}

\begin{thebibliography}{1}

\bibitem{AC}
Dima Arinkin and Andrei C{\u{a}}ld{\u{a}}raru.
\newblock When is the self-intersection of a subvariety a fibration?
\newblock {\em Adv. Math.}, 231(2):815--842, 2012.

\bibitem{ACH}
Dima Arinkin, Andrei C\u{a}ld\u{a}raru, and Marton Hablicsek.
\newblock On the formality of derived intersections.
\newblock {\em Preprint}, 2013.

\bibitem{BS}
Armand Borel and Jean-Pierre Serre.
\newblock Le th\'eor\`eme de {R}iemann-{R}och.
\newblock {\em Bull. Soc. Math. France}, 86:97--136, 1958.

\bibitem{FU}
William Fulton.
\newblock {\em Intersection theory}.
\newblock Springer-Verlag, Berlin, 1998.

\bibitem{HKR}
J.~Grivaux.
\newblock The {H}ochschild-{K}ostant-{R}osenberg isomorphism for quantized
  analytic cycles.
\newblock {\em Int. Mat. Res. Notices}, 2012.

\bibitem{KA}
M.~{Kashiwara}.
\newblock {Unpublished letter to Pierre Schapira}, 1991.

\bibitem{SE}
Jean-Pierre Serre.
\newblock {\em Local algebra}.
\newblock Springer-Verlag, Berlin, 2000.

\end{thebibliography}

\end{document}